\documentclass[pdflatex,sn-basic]{sn-jnl}


\usepackage{graphicx}%
\usepackage{multirow}%
\usepackage{amsmath,amssymb,amsfonts}%
\usepackage{amsthm}%
\usepackage{mathrsfs}%
\usepackage[title]{appendix}%
\usepackage{xcolor}%
\usepackage{textcomp}%
\usepackage{manyfoot}%
\usepackage{booktabs}%
\usepackage{algorithm}%
\usepackage{algorithmicx}%
\usepackage[noend]{algpseudocode}%
\usepackage{listings}%
\usepackage{soul}

\usepackage{preamble}

\theoremstyle{thmstyleone}%
\newtheorem{theorem}{Theorem}
\newtheorem{proposition}[theorem]{Proposition}%
\newtheorem{lemma}[theorem]{Lemma}
\newtheorem{definition}[theorem]{Definition}%

\theoremstyle{thmstyletwo}%
\newtheorem{example}{Example}%
\newtheorem{claim}{Claim}
 
\theoremstyle{thmstylethree}%

\raggedbottom

\begin{document}

\title[Tropical Fermat-Weber Points]{Tropical Fermat-Weber Points over \edit{Spaces of $M$-Ultrametrics}}


\author[1]{\fnm{Shelby} \sur{Cox}}\email{spcox@umich.edu}
\equalcont{These authors contributed equally to this work.}

\author[2]{\fnm{John} \sur{Sabol}}\email{john.sabol@nps.edu}
\equalcont{These authors contributed equally to this work.}

\author[3]{\fnm{Roan} \sur{Talbut}}\email{r.talbut21@imperial.ac.uk}
\equalcont{These authors contributed equally to this work.}

\author*[2]{\fnm{Ruriko} \sur{Yoshida}}\email{ryoshida@nps.edu}

\affil[1]{\orgname{The Max Planck Institute for Mathematics in the Sciences}, \orgaddress{\street{Inselstraße 22}, \city{Leipzig}, \postcode{04103},  \country{Germany}}}

\affil[2]{\orgdiv{Operations Research Department}, \orgname{Naval Postgraduate School}, \orgaddress{\street{1411 Cunningham Road}, \city{Monterey}, \postcode{93943}, \state{CA}, \country{USA}}}

\affil[3]{\orgdiv{Mathematics Department}, \orgname{Imperial College London}, \orgaddress{\street{South Kensington Campus}, \city{London}, \postcode{SW7 2AZ}, \country{UK}}}


\abstract{\edit{We extend reconstruction methods for phylogenetic trees to ultrametrics of arbitrary matroids and study the stability of these data analysis methods in the combinatorial spirit of Andreas Dress.
In particular, we generalize Atteson's work on the safety radius of phylogenetic reconstruction methods, as well as Gascuel and Steel's work on the stochastic safety radius, to arbitrary matroids.
We also show that although the tropical Fermat-Weber points of an $M$-ultrametric sample are generally not contained in the space of $M$-ultrametrics, the intersection between the Fermat-Weber set and the space of $M$-ultrametrics is non-empty.}}

\keywords{Matroids, Max-Plus Algebra, Phylogenomics, Tropical Geometry}



\maketitle
\begin{center}
{\large \textit{Dedicated to Andreas Dress}}    
\end{center}

\section{Introduction}

Andreas Dress was a pioneer at the intersection of mathematics and evolutionary biology. \edit{In particular, he} made significant contributions to the combinatorics of phylogenetic trees (for example, \citep{dress2014associated}). 
A phylogenetic tree is a metric tree with non-negatively weighted \rot{edges, which is often used to represent} evolutionary relationships between taxa. 
It is well known that the space of phylogenetic trees with $p$ leaves is the Dressian $\edit{\mathrm{Dr}}(2, p)$. 
\rot{In \citep{dress2005delta}, Andreas Dress studied {\em ultrametrics}, or equivalently, {\em equidistant trees}, which are rooted phylogenetic trees whose total branch length from the root to each leaf is the same for all $p$ leaves}.
In 2006, Ardila and Klivans showed that the space of \edit{ultrametrics} is the Bergman fan associated with the matroid \edit{of} the complete graph on $p$ vertices \citep{arilda2006bergmancomplex}.

In phylogenetics, one of the main tasks is to reconstruct a phylogenetic tree from an alignment.  
A {\em distance-based method} for phylogenetic tree reconstruction is \rot{a map which takes a dissimilarity metric as input data, and maps} onto the space of phylogenetic trees.
\edit{Distance-based reconstruction methods were generalized to Bergman fans of arbitrary matroids by  \cite{ardila2005subdominant} via tropical projection, and studied by  \cite{bernstein2020infinity}.}
\edit{There, the points of the Bergman fan of $M$ are called $M$-ultrametrics.}

\edit{In this paper,} we study the tropical projection of {\em tropical Fermat-Weber points} to spaces of $M$-ultrametrics via the map defined in \Cref{defn: tropical projection}.
\edit{Fermat-Weber points are a generalization of the median to arbitrary metric spaces.
{\em Tropical} Fermat-Weber points are Fermat-Weber points in the tropical projective torus, equipped with the standard tropical metric.}
\edit{It is known that in this case,} even if all observations are \edit{$M$-ultrametrics}, the \edit{tropical} Fermat-Weber points are not, in general, $M$-ultrametrics \citep{lin2017convexity}\edit{; for more on tropical Fermat-Weber points, see \cite{sabol2024polytropes}.}
\edit{In \Cref{thm: symmetric tropical median}, we show} that if all observations are \edit{$M$-ultrametrics, then} the intersection \rot{of} the set of \edit{tropical} Fermat-Weber points and \edit{the space of $M$-ultrametrics} is not empty, and that it is equal to the projection of the Fermat-Weber points onto \edit{the space of $M$-ultrametrics}.  

In practice, \edit{observed data} are usually not \edit{$M$-}ultrametrics \rot{due to noise in the observations}. 
\rot{To quantify the maximum error under which projected \edit{tropical} Fermat-Weber points are stable}, we extend the \edit{definition of} {\em safety radius} \edit{for phylogenetic reconstruction, originally given} by \edit{Atteson in} \citep{atteson1999performance}, to \edit{the} projection of the set of all \edit{tropical} Fermat-Weber points to \edit{the space of $M$-ultrametrics}.  
\edit{In \Cref{thm: tropical safety radius}, w}e show that \edit{this $M$-ultrametric reconstruction method has} safety radius $1/2$ for any matroid\edit{, which is the maximum possible}.  

We also \rot{consider the influence of stochastic error, for which we} define the {\em Fermat-Weber $M$-ultrametric stochastic safety radius} \edit{(\Cref{defn: stochastic safety radius}), extending the definition of stochastic safety radius for phylogenetic construction given by Gascuel and Steel in \citep{gascuel2016stochastic}.}
\edit{In \Cref{thm: stochastic safety radius}, we show that our generalized stochastic safety radius is always non-zero.} 
\edit{We end with some computational experiments demonstrating the $\fw$ $M$-ultrametric safety radius for a sample from a particular Bergman fan under additive Gaussian noise, and a summary of experiments for all matroids on 8 elements. 
Our results suggest that in practice our bound for the $\fw$ $M$-ultrametric safety radius is very conservative, which aligns with previous observations in the case of phylogenetic trees \citep{gascuel2016stochastic}.}

\edit{The paper is structured as follows. \Cref{sec:safety} reviews the necessary background in phylogenetics and tropical geometry, and connects these areas through the definition of the $M$-ultrametric safety radius (\Cref{defn: safety radius}). In \Cref{sec:FW_Safety}, we determine the safety radius of the procedure which takes a finite sample of $M$-ultrametrics to the projection of its tropical Fermat-Weber points. Finally, in \Cref{sec:stochastic safety radius}, we prove bounds on the stochastic safety radius and discuss experimental results.}

\edit{
\subsection*{Notation}
Throughout this paper we denote $[p]:=\{1, \ldots , p\}$ for a positive integer $p$. We use $\qq_U$ to denote the zero-one vector with support $U$, and write $x_a + y_a$ as $(x+y)_a$. For a point $w \in \R^q$, we use the notation $\argmax_{e \in [q]} w_e = \{ e \in [q] : w_e = \max_{f \in [q]} w_f \}$. We use $\mathbb{I}_q$ to denote the $q\times q$ identity matrix, and $\mathbb{P}(A)$ for the probability of event $A$.
}

\section{Safety Radius for $M$-Ultrametrics}
\label{sec:safety}

In this section, \edit{we recall} the tropical projection onto the space of \edit{$M$-}ultrametrics, and \edit{we define} its {\em safety radius}.
We begin by reminding readers of some background on phylogenetic trees.

\subsection{Phylogenetics}

A phylogenetic tree on $p$ leaves is a \rot{rooted} tree with non-negatively weighted edges which represents the evolutionary history of a set of $p$ taxa or species.
Its internal vertices are unlabeled\edit{,} and its $p$ leaves are labeled \edit{bijectively} by the elements of $[p]$. 
We further assume \edit{the root has degree at least two, and all other internal vertices have degree at least three}.
\edit{We denote by $E(T)$ the edges of the phylogenetic tree $T$.}
An edge of $T$ is an \emph{internal edge} if neither endpoint is a leaf.
\rot{The \emph{space of all phylogenetic trees on $p$ leaves} is denoted $\tspace_p$.}

The \emph{topology} of a phylogenetic tree $T$, which we denote by $\tau(T)$, is the underlying unweighted, leaf-labeled graph of $T$.
A tree topology is \emph{binary} if the root has degree two, and every other non-leaf vertex has degree three.
For example, \rot{of the 26 distinct tree topologies with four leaves, 15 topologies are binary.}

A \emph{distance-based phylogenetic reconstruction method} (or \emph{distance-based method}) takes as input estimated pairwise evolutionary distances among $p$ taxa, and outputs a phylogenetic tree on $p$ leaves. 
\edit{For further information on how these distances are obtained in practice, see} \citep{phylogeneticsTextbook}.  
\edit{This} set of estimated pairwise distances \rot{forms a \textit{dissimilarity matrix}, that is, a symmetric non-negative $p \times p$ matrix, $\edit{D=[d_{ij}]}$, \edit{where $d_{ij}$} encodes the evolutionary distance between taxa $i,j \in [p]$}.
Note that $D$ is not necessarily a metric, since we do not assume that it satisfies the triangle inequality.
We denote the \emph{set of \edit{$p \times p$} dissimilarity matrices} by $\dspace_p$.

The \emph{tree metric} associated \edit{to} a phylogenetic tree $T$ is \rot{the dissimilarity matrix whose $(i,j)$ entry records the length of the unique path in $T$ from leaf $i$ to leaf $j$}.
\edit{Each tree metric arises from a unique tree \cite[Theorem 7.2.8]{phylogeneticsTextbook}. 
Thus, we identify the space of phylogenetic trees $\mathcal T_p$ with the subset of dissimilarity matrices that arise as tree metrics}.
With this notation, a distance-based phylogenetic reconstruction method is simply a function $\phi: \dspace_p \to \tspace_p$. 

An \emph{equidistant tree} on $p$ leaves is a phylogenetic tree such that the distance from the root to any leaf is the same. 
\rot{Such} trees appear \edit{in phylogenetics} under the molecular clock assumption, which asserts that all lineages evolve at roughly equal rates.
The tree metric of an equidistant tree is also known as an ultrametric \citep{buneman1974properties}.
\edit{We use$\,\,\uspace_p$ to refer to the space of equidistant trees on $p$ leaves, or equivalently to the subset of $p \times p$ dissimilarity matrices that arise as the tree metric of an equidistant tree.}

\edit{In practice, dissimilarity measures are estimated from data which are noisy or incomplete, resulting in dissimilarity matrices that typically do not satisfy the four point condition.}
While it is impossible to recover \edit{a tree metric exactly from its noisy dissimilarity matrix}, the primary concern of phylogenetic reconstruction is \edit{recovering} the correct tree topology. 
Therefore, one asks: when does a distance-based method produce a tree with the same topology as $T$, given noisy input $D = D_T + \epsilon$~?

The \emph{safety radius}, introduced by \cite{atteson1999performance}, quantifies the amount of noise a distance-based method can tolerate.
The tolerance for a given tree $T$ depends on the minimum edge length in $T$.

\begin{definition}[Safety radius, {\citep{atteson1999performance}}]
    \label{defn: safety radius trees}
    Let $l(e)$ denote the length of edge $e$ for $e \in \edit{E(T)}$. A distance-based reconstruction method $\phi: \dspace_p \to \tspace_p$ has \emph{safety radius} $s$ if for every binary tree $T \in \tspace_p$, and every dissimilarity matrix $D \in \dspace_p$ such that
    \[ \| D_T - D \|_\infty ~ < ~s ~\cdot \min_{e \in \edit{E^(T)}} l(e), \]
    the tree $\phi(D)$ has topology $\tau(T)$.
\end{definition}

\edit{Atteson} also \rot{ shows that the safety radius of any distance-based method is at most $1/2$ ({Lemma 3, \cite{atteson1999performance}})}.
\edit{In \Cref{defn: safety radius}, we will adapt this safety radius to ultrametrics of arbitrary matroids.}

\subsection{$M$-Ultrametrics}

Our goal in this section is to generalize the \edit{notion of} safety radius to $M$-ultrametric reconstruction methods. An \emph{$M$-ultrametric reconstruction method} is a function that takes as input \edit{a point in $\TPT{q}$} and outputs an $M$-ultrametric.
The reconstruction method we study is a nearest point map under the tropical symmetric metric to \edit{the space of $M$-ultrametrics}.
We begin with the relevant background on matroids and tropical geometry. 
We assume that the reader has some familiarity with these topics; for further details, we refer the reader to \citep{ETC}.

$M$-ultrametrics are a generalization of ultrametrics \edit{to arbitrary} matroids \edit{as} introduced by \cite{ardila2005subdominant}. \edit{We refer to the set of all $M$-ultrametrics for the matroid $M$ as the {\em space of $M$-ultrametrics}.}

\begin{definition}
    \label{defn: bergman fan}
    Let $M$ be a matroid on ground set $[q]$. 
    The \edit{\emph{space of $M$-ultrametrics}}, denoted $\uspace_M$, is the set of all $w \in \R^q$ such that for every circuit $C$, $\argmax_{e \in C} w_e$ has cardinality at least two.
\end{definition}

\rot{The space of $M$-ultrametrics can be endowed with the structure of a polyhedral fan, the coarsest such fan structure being the Bergman fan. Since $\uspace_M$ is the underlying set of the Bergman fan $\berg(M)$, we sometimes refer to the set $\uspace_M$ as a Bergman fan.}
\edit{The fan structure we consider on $\mathcal{U}_M$ is the nested set fan on connected flats as described in \citep{bernstein2020infinity}; see also \citep{feichtner2005matroid}.}
We denote this fan structure by $\nested(M)$. \rot{Through the rest of this paper, we make the usual assumption that $M$ is connected for notational ease.}

Recall that a \emph{nested set}, $\nset$, is a set of connected flats such that for all pairwise incomparable $F_1, \ldots, F_r \in \nset$, the closure of their union, $\overline{F_1 \cup \cdots \cup F_r}$, is disconnected. 
\edit{As in \citep{bernstein2020infinity}, we further require that every nested set contains $[q]$.}
The cones of $\nested(M)$ are in bijection with \rot{the collection of} nested sets, and we denote the cone corresponding to \rot{a nested set} $\nset$ by $K_\nset$.
For each $\nset$, we have
\begin{align}\label{eq:K_S}
    K_\nset &\edit{\coloneqq} \{ \mu \qq + \textstyle\sum_{F \in \nset} -\lambda_F \qq_F | \lambda_F \geq 0, \mu \in \R \}.
\end{align}
\edit{We will denote by }$\relint K_\nset$ the relative interior of $K_\nset$, which is the interior of $K_\nset$ as a subset of the affine span of $K_\nset$.
The boundary of $K_\nset$ is $\partial K_\nset \coloneqq K_\nset \setminus \relint K_\nset$.

In this paper, we  use an alternative characterization of the cones of $\nested(M)$, which is described in \rot{terms of the $\argmax$ sets for each circuit}. \rot{The {\em argmax fan} structure on $\mathcal{U}_M$ consists of the cones $\{ K^w$ : $w \in \mathcal{U}_M \}$, where:
\[
    K^w := \overline{\{ u \in \mathcal{U}_M : \text{ for all circuits $C$ of $M$,} \argmax_{e \in C} u_e = \argmax_{e \in C} w_e \}}.
\]
It is clear that each $K^w$ is a cone, and $K^w \subseteq K^{w'}$ if and only if, for all circuits $C$ of $M$, $\argmax_{e \in C} w'_e \subseteq \argmax_{e \in C} w_e$, with a strict inclusion on the left if and only if there is a strict inclusion on the right for at least one circuit. We now show that the argmax fan is exactly the nested set fan.} 

\begin{lemma}
    \label{lem: nested argmax}
    \rot{Let $K_{\mathcal S}$ be a cone in the nested set fan $\nested(M)$ with $w \in \relint K_{\mathcal S}$. Then $K_{\mathcal S} = K^w$. Furthermore, if $K_{\mathcal S'}$ is another cone in the nested set fan $\nested(M)$ with $w' \in \relint K_{\mathcal S'}$, then}
    \begin{equation}
        \label{eqn: bergman inclusion}
        \rot{K_{\mathcal S} \subseteq K_{\mathcal S'} \iff \rot{K^{w} \subseteq K^{w'}}.}
    \end{equation}
\end{lemma}
\begin{proof}
    Given any nested set $\nset$, and any $w \in \relint K_\nset$, the data $\{ (C, \argmax_{e \in C} w_e) : C \text{ is a circuit of } M \}$ is determined by $\nset$.
    Specifically, we claim that for any circuit $C$, $\argmax_{e \in C} w_e = C \setminus \cup_{C \not\subset F \in \nset} F =: C^\prime$.
    
    First, we show that $C^\prime$ is not empty. 
    Suppose otherwise, and let $F_1, \dots, F_m$ be a set of connected flats covering $C$, and satisfying $C \not\subset F \in \mathcal S$.
    Without loss of generality, we may assume that $F_1, \ldots, F_m$ are pairwise incomparable. 
    Let $F = \overline{F_1 \cup \dots \cup F_m}$.
    Since each flat is connected and intersects $C$ non-trivially, $F$ is connected.
    But this contradicts the assumption that $\nset$ is a nested set.
    Therefore, $C^\prime$ is non-empty.
    Moreover, $\argmax_{e \in C} w_e = C^\prime$.
    To see this, recall that $w = \sum_{F \in \edit{\mathcal S}} -\lambda_F \qq_F$, with $\lambda_F > 0$ for all $F$ \edit{(as $w$ is in the relative interior of $K_\nset$)}.
    For all $e, e^\prime \in C^\prime$ and $f \in C \setminus C^\prime$, $w_e = w_{e^\prime} > w_f$.
    Thus, $\argmax_{e \in C} w_e = C^\prime$.

    Now \edit{consider} $w \in \rot{\mathcal{U}_M}$.
    We will show there is a nested set $\nset$ so that $\rot{K^{w}} = K_\nset$.
    We just showed that $\nested(M)$ refines the \edit{argmax} fan structure.
    Thus, \edit{if $K^{w}$ is not a cone of $\nested(M)$}, there are two nested sets $\nset_1, \nset_2$ so that $K_{\nset_1}, K_{\nset_2} \subseteq \rot{K^{w}}$, and $\relint (K_{\nset_1} \cap K_{\nset_2}) \subseteq \relint \rot{K^{w}}$.
    Let $w^1 \in \relint K_{\nset_1}$, and $w^2 \in \relint K_{\nset_1 \cap \nset_2} = \relint (K_{\nset_1} \cap K_{\nset_2})$.
    Since $w^1, w^2 \in \relint \rot{K^{w}}$, we have $\argmax_{e \in C} w^1_e = \argmax_{e \in C} w^2_e, \text{ for all circuits } C \text{ of } M.$

    Suppose for a contradiction that $\nset_1 \neq (\nset_1 \cap \nset_2)$.
    Then there exists $F \in \nset_1 \setminus (\nset_1 \cap \nset_2)$.
    As \edit{every nested set contains $[q]$,} $\edit{\mathcal S_1} \cap \edit{\mathcal S_2}$ is a \edit{non-empty} nested set\edit{.
    Thus,} there must exist some minimal $F' \in \edit{\mathcal S_1} \cap \edit{\mathcal S_2}$ \edit{containing} $F$. Note that as $F \notin \edit{\mathcal S_2}$, we have the strict containment $F \subset F'$ and we can pick some $i \in F' \setminus F$.

    We now define $F_1, \dots, F_k$ to be the maximal flats in $\edit{\mathcal S_1} \cap \edit{\mathcal S_2}$ which are contained in $F$. Note that as these flats are incomparable, the closure of their union cannot be connected (by the definition of nested sets). This closure must therefore be strictly contained in $F$, so we can pick some $j$ in $F \setminus (F_1 \cup \dots \cup F_k)$.

     The flat $F'$ is connected, so there is some circuit $C \subset F'$ connecting $i,j$. We consider the $\argmax$ over this circuit. For $w^1 \in \relint K_{\edit{\mathcal S_1}}$, we have shown that:
    \begin{align*}
        \argmax_{e \in C} w^1_e = C \setminus \cup_{C \not\subset F'' \in \mathcal{S}^1} F'' \subseteq C \setminus F.
    \end{align*}
    Hence $j \notin \argmax_{e \in C} w^1_e$.

    We now consider $\argmax_{e \in C} w^2$ for $w^2 \in \edit{\relint} K_{\edit{\mathcal S_1} \cap \edit{\mathcal S_2}}$.
    We claim there is no element of $\nset_1 \cap \nset_2$ that both contains $j$ and does not contain $C$.
    If our claim holds, then
    \begin{align*}
        \argmax_{e \in C} w^2_e = C \setminus \cup_{C \not\subset F'' \in \edit{\mathcal{S}_1} \cap \edit{\mathcal S_2}} F''  \ni j.
    \end{align*}
    Therefore we have some circuit $C$ such that $\argmax_{e \in C} w^1_e \neq \argmax_{e \in C} w^2_e$, and this contradicts our assumption that $w^1, w^2 \in \relint \rot{K^{w}}$.

    \rot{It therefore remains to prove our claim that there is no element of $\nset_1 \cap \nset_2$ that both contains $j$ and does not contain $C$;} suppose for a contradiction that $F''$ is an element of $\nset_1 \cap \nset_2$ such that $C \not \subset F''$ and $j \in F''$. Then $F'' \subset F'$ as $F''$ has non-empty intersection with $F'$ but doesn't contain all of $C$. Also, either $F \subset F''$ or $F'' \subset F$ as we have $j \in F \cap F''$. Since no flat satisfies $F \subset F'' \subset F'$, we must have $F'' \subset F$. But our choice of $j$ ensures that $j \notin F''$; thus, no such $F''$ exists. Hence, for all $F''$ satisfying $C \not \subset F'' \in \edit{\mathcal S_1} \cap \edit{\mathcal S_2}$, either $C \cap F'' = \emptyset$ or $j \notin F''$.

    \rot{We have shown that the cones of the nested set fan and the argmax fan are in bijection. 
    We conclude by noting that if $w^\prime$ is a point on the boundary of $K^w$}, then $w \in K_\nset$ and $w^\prime \in K_{\nset^\prime}$, where $\nset^\prime \subseteq \nset$.
    Therefore, $\argmax_{e \in C} w^\prime_e \subseteq \argmax_{e \in C} w_e$ for all circuits $C$, \rot{and therefore $K^{w'} \subseteq K^w$}.
    This completes the proof of (\ref{eqn: bergman inclusion}).
\end{proof}

When \rot{$M(K_p)$} is the matroid of the complete graph on $p$ vertices, \rot{$\uspace_{M(K_p)}$} is the space of equidistant trees on $p$ leaves \citep{arilda2006bergmancomplex}\rot{, or equivalently the space of ultrametrics $\mathcal U_p$}.
In this case, the coordinates of the ambient space, $\R^{\edit{q}}$, correspond to the \edit{$q = {p \choose 2}$} edges of the complete graph.
Each cone of $\nested(M)$ corresponds to a unique tree topology on $p$ leaves. 
\rot{This is why points in $\uspace_M$ are referred to as $M$-ultrametrics.} 
\edit{For general matroids, the fan structures $\berg(M)$ and $\nested(M)$ may not coincide, as demonstrated in the following example.}

\begin{example}\label{ex:refinement}
    \edit{Let $M=M(G)$ be the rank $r=5$ graphical matroid described by the graph $G$ in \cref{fig:M(G)}, which will serve as our running example. $M$ has 30 bases, 6 circuits, and 78 (proper) flats. $\berg(M)$ contains 54 maximal cones, 18 of which are further refined by $\nested(M)$, which contains 79 maximal cones. 
    Further refinement is possible, e.g.\ via Rincon's \emph{cyclic} Bergman fan \citep{rincon2013computing}, which contains 88 maximal cones and subdivides 9 maximal cones of $\nested(M)$.}
    
    \edit{
    Maximal cones of the Bergman fan $\berg(M)$ correspond to unordered partitions $\{B_1, \ldots , B_r\}$ of $[q]$ into $r$ non-empty blocks \cite[Corollary 4.7]{feichtner2005matroid}.
    Let $K \in \berg(M)$ be the cone identified by the partition $\{ \{1,2 \}, \{3 \}, \{4,5 \}, \{6 \}, \{7,8 \} \}$.
    \cref{fig:M(G)} depicts the two nested sets $\mathcal{S}_1$ and  $\mathcal{S}_2$ of the partition, which refine $K$ into $K_{S_1}, K_{S_2} \in \nested(M)$. Let $F_{U}:=\{e_j \mid j\in U\}$ denote a flat of $M$. For $\mathcal{S}_1=\{F_{3}, F_{6}, F_{378}, F_{456} \}$, if we let $\mu=5$, $\lambda_{F_3}=\lambda_{F_6}=\lambda_{F_{378}}=2$, and $\lambda_{F_{456}}=3$, then \cref{eq:K_S} yields the vector $w=(5,5,1,2,2,0,3,3) \in \relint(K_{S_1})$. The cone $K_{S_1}$ is further refined into two maximal cones in the cyclic Bergman fan through $F_{345678}$, which is cyclic (see \citep{rincon2013computing}) but not connected.
    }
\end{example}

\begin{figure}
    \centering 
    \scriptsize
    \begin{tikzpicture}[scale=2]
        \tikzstyle{vertex}=[circle, draw, inner sep=2pt]
      \node[vertex] (a) at (0,0) {};
      \node[vertex] (b) at (1,0) {};
      \node[vertex] (c) at (1,1) {};
      \node[vertex] (d) at (0,1) {};
      \node[vertex] (e) at (-0.5,0.5) {};
      \node[vertex] (f) at (0.5,0.5) {};
      \draw (a) -- node[above] {1} (b);
      \draw (b) -- node[right] {2} (c);
      \draw (c) -- node[below] {3} (d);
      \draw (a) -- node[left] {4} (d);
      \draw (a) -- node[left] {5} (e);
      \draw (d) -- node[above left] {6} (e);
      \draw (c) -- node[below right] {7} (f);
      \draw (d) -- node[below left] {8} (f);
      \node (12) at (1.75,0.5) {$\{1,2\}$};
      \node (45) at (2.25,0.9) {$\{4,5\}$};
      \node (78) at (2.25,0.1) {$\{7,8\}$};
      \node (3) at (2.9, 0.1) {$\{3\}$};
      \node (6) at (2.9, 0.9) {$\{6\}$};
      \draw[->] (12) -- (45);
      \draw[->] (12) -- (78);
      \draw[->] (45) -- (6);
      \draw[->] (78) -- (3);

      \node (12b) at (4.4,0.5) {$\{1,2\}$};
      \node (45b) at (4.9,0.9) {$\{4,5\}$};
      \node (78b) at (3.75,0.5) {$\{7,8\}$};
      \node (3b) at (4.9, 0.1) {$\{3\}$};
      \node (6b) at (5.55, 0.9) {$\{6\}$};
      \draw[->] (12b) -- (3b);
      \draw[->] (12b) -- (45b);
      \draw[->] (45b) -- (6b);
      \draw[->] (78b) -- (12b);
    \end{tikzpicture}
\caption{\edit{Left: The graph $G$ representing the graphical matroid $M(G)$ in \cref{ex:refinement}. Center: The nested set structure of the flats generating the maximal cone $K_{S_1}\in \nested(M)$. Right: The nested set structure for $K_{S_2}$.}}
    \label{fig:M(G)}
\end{figure}

We view the \rot{space of $M$-ultrametrics} through the lens of tropical geometry.
The \emph{tropical max-plus semiring} is $\T := (\R \cup \{- \infty\}, \oplus, \odot)$, where $a \oplus b$ is defined as the maximum, $\max(a, b)$, and $a \odot b$ \edit{is} given by the classical addition $a + b$.
Tropical arithmetic extends coordinate-wise to the semimodule $\T^q$. 
The tropical sum of two vectors is their coordinate-wise maximum, and tropical scalar multiplication by $\lambda \in \T$ is translation by $\lambda \qq$.

The $(q-1)$-dimensional \emph{tropical projective torus}, $\TPT{q}$, is a quotient space constructed by endowing $\R^{q}$ with the equivalence relation $x \sim y \Leftrightarrow x = a \odot y$. Similarly, the $(q-1)$-dimensional \emph{tropical projective space} is $\TP^{q-1} = (\T^{q} \setminus \{ -\infty \qq \})/\R\qq$.
\edit{Going forward, we identify $\uspace_M$ with its image in $\TPT{q}$; for the closure of $\uspace_M$ in $\TP^{q-1}$, we use $\tilde{\uspace}_M$.}
This is justified by the fact that for any $M$-ultrametric $w$, $a \odot w$ is also an $M$-ultrametric for any $a \in \T$.

Ardila showed in \cite[Section 4]{ardila2005subdominant} that $\tilde\uspace_M$ is a tropical polytope in $\TP^{q-1}$.
A \emph{tropical polytope} \rot{$P$} is the set of all tropical linear combinations of points in a finite set $V \subseteq \TP^{q-1}$.
If $V = \{ v_1, \ldots, v_r \}$, we denote this set by
\[ \tconv(V) = \{ (\lambda_1 \odot v_1) \oplus \dots \oplus (\lambda_r \odot v_r) : \lambda_1, \ldots, \lambda_r \in \T \}. \]

\begin{proposition}[Section 4, {\citep{ardila2005subdominant}}]
    \label{prop: bergman vertices}
    The vertices of $~\edit{\tilde\uspace_M}$ as a tropical polytope are 
    \[ V_M = \{ v_F : F \text{ is a maximal proper flat of } M\}, \]
    where $v_F \in \TP^{q-1}$ is the vector whose $\ith$ coordinate is $-\infty$ if $i \in F$, and 0 otherwise. 
\end{proposition}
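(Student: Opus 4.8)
The plan is to prove the two assertions in the statement separately: first that $\berg(M) = \tconv(V_M)$, and second that $V_M$ is exactly the minimal generating set of this tropical polytope, i.e.\ that no $v_F$ lies in the tropical convex hull of the others. Throughout I would work with the explicit coordinate formula for a tropical combination of the generators: if $w = \bigoplus_F \lambda_F \odot v_F$, then since $(v_F)_e = 0$ for $e \notin F$ and $(v_F)_e = -\infty$ for $e \in F$, we have $w_e = \max_{F \not\ni e} \lambda_F$, the maximum ranging over maximal proper flats $F$ not containing $e$. All the combinatorics is then governed by one matroid fact: a flat cannot contain all but exactly one element of a circuit, since if $C \setminus \{e^\ast\} \subseteq F$ for a circuit $C$, then $e^\ast \in \overline{C \setminus \{e^\ast\}} \subseteq \overline{F} = F$, contradicting $e^\ast \in C \setminus F$.

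For the inclusion $\tconv(V_M) \subseteq \berg(M)$, I would take $w = \bigoplus_F \lambda_F \odot v_F$ and verify the circuit condition of \Cref{defn: bergman fan} directly. Fixing a circuit $C$, let $m = \max_{e \in C} w_e$, attained at some $e^\ast \in C$ via a maximal proper flat $F^\ast \not\ni e^\ast$ with $\lambda_{F^\ast} = m$. By the matroid fact above $C \setminus F^\ast$ has at least two elements, so there is $e' \in C \setminus F^\ast$ with $e' \neq e^\ast$; since $F^\ast \not\ni e'$ we get $w_{e'} \geq \lambda_{F^\ast} = m$, hence $w_{e'} = m$. Thus $\argmax_{e \in C} w_e$ has cardinality at least two and $w \in \berg(M)$.

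The reverse inclusion $\berg(M) \subseteq \tconv(V_M)$ is where the real work lies. Given $w \in \berg(M)$, I would set $\lambda_F := \min_{e \notin F} w_e$ (the canonical coefficients of the tropical projection onto $\tconv(V_M)$) and show that $\tilde w_e := \max_{F \not\ni e} \lambda_F$ recovers $w$. The inequality $\tilde w_e \leq w_e$ is immediate, because each $\lambda_F$ with $F \not\ni e$ is a minimum over a set containing $e$. For the reverse inequality I need a maximal proper flat $F$ with $e \notin F$ and $\min_{e' \notin F} w_{e'} = w_e$; equivalently, $F$ must contain $S := \{ e' : w_{e'} < w_e \}$ but not $e$. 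The key step, and the main obstacle, is to show $e \notin \overline{S}$: if instead $e \in \overline{S}$, there is a circuit $C \ni e$ with $C \setminus \{e\} \subseteq S$, so $w_e > w_{e'}$ for every other $e' \in C$, forcing $\argmax_{e \in C} w_e = \{e\}$ and contradicting $w \in \berg(M)$. Once $e \notin \overline{S}$ is established, a standard extension argument produces the required hyperplane: in the contraction $M / \overline{S}$ the element $e$ is a non-loop, hence avoided by some hyperplane of $M/\overline{S}$, whose preimage is a maximal proper flat $F \supseteq \overline{S} \supseteq S$ with $e \notin F$; this $F$ witnesses $\tilde w_e \geq w_e$.

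Finally, to identify $V_M$ as the set of tropical vertices, I would show that no generator is redundant, i.e.\ $v_F \notin \tconv(V_M \setminus \{ v_F \})$ for each maximal proper flat $F$. Writing a putative combination $v_F = \bigoplus_{G \neq F} \lambda_G \odot v_G$ and reading off the coordinates $e \in F$ forces every $G$ with $\lambda_G > -\infty$ to contain $e$, hence to contain all of $F$; but a maximal proper flat $G \neq F$ cannot contain $F$, so every $\lambda_G = -\infty$ and the combination equals $-\infty \qq \neq v_F$, a contradiction. I would add the remark that the statement is to be read in $\TP^{q-1}$, so that points with $-\infty$ coordinates are permitted; the finite-coordinate computations above extend to the boundary under the convention that $-\infty$ is the additive identity, and the standing loopless, connected assumptions on $M$ ensure that the maximal proper flats are precisely the hyperplanes used throughout.
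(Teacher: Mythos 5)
The paper does not actually prove this proposition --- it imports it from Section 4 of \cite{ardila2005subdominant} --- so there is no internal proof to compare against; your blind proof is correct and follows essentially the same route as Ardila's original argument, establishing $\berg(M)=\tconv(V_M)$ via the Develin--Sturmfels coefficients $\lambda_F=\min_{e\notin F}w_e$, with the two key matroid steps (a flat cannot contain all but exactly one element of a circuit, and $e\notin\overline{S}$ yields a hyperplane containing $S$ and avoiding $e$) exactly as in the source, plus the standard non-redundancy check that identifies $V_M$ as the vertex set. Your closing remark about reading everything in $\TP^{q-1}$ appropriately handles the only delicate point, namely that \Cref{defn: bergman fan} is stated for $w\in\R^q$ while the generators $v_F$ have $-\infty$ entries.
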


\rot{We now consider the metric geometry of the tropical projective torus and Bergman fans.}
\rot{We endow} the tropical projective torus \rot{with} the \emph{tropical metric}, which is given by
\begin{equation}\label{eq:sym_dist}
    \dtr(x,y) = \max_{1 \leq i \leq q} \{ y_i-x_i \} - \min_{1 \leq i \leq q} \{ y_i - x_i \} = \max_{1 \leq i, j \leq q} \{ x_i - y_i - x_j + y_j\}
\end{equation}
\noindent for all $x, y \in \TPT{q}$.

 \cite{develin2004tropicalconvexity} give an explicit map, $\pi_P : \TP^{q-1} \to P$, which outputs a nearest point in $P$ under the tropical metric.
We recall that map now.

\begin{definition}[Projection to a Tropical Polytope {\citep{develin2004tropicalconvexity}}]
    \label{defn: tropical projection}
    Let $P = \tconv(v_1, \ldots, v_r)$.
    For $i \in [r]$, define $\lambda_i = \max \{ \lambda \in \R : \lambda \odot v_i \odot x = x \} = \min_e (x - v_i)_e$.
    For any $x \in \TP^{q-1}$, the point
    \begin{equation*}
        \label{eqn: tropical projection}
        \pi_P(x) = (\lambda_1 \odot v_1) \oplus \cdots \oplus (\lambda_r \odot v_r)
    \end{equation*}
    minimizes the distance $\dtr(y,x)$ over all $y \in P$.
    We call $\pi_P(x)$ the tropical projection to $P$.
\end{definition}

\edit{The vertices of $\uspace_M$ given in \Cref{prop: bergman vertices}, together with the tropical projection in \Cref{defn: tropical projection}, define a map $\pi_{\edit{\tilde{\uspace}_M}}: \TP^{q-1} \to \edit{\tilde{\uspace}_M}$.}
\edit{Restricting the domain to $\TPT{q}$ yields a map} $\pi_{\edit{\uspace_M}}: \edit{\TPT{q}} \to \edit{\uspace_M}$\edit{, which we refer to} by $\pi_M$ to emphasize its dependency on the matroid $M$.

A particularly important fact that we will use about tropical projections is that they are non-expansive under the tropical metric.
To our knowledge, this result first appeared in the preprint \citep{jaggi2008new}; we also include a short proof here.

 \begin{lemma}[Lemma 19, {\citep{jaggi2008new}}]
    \label{lem: non-expansive}
    Let $\pi_M: \edit{\TPT{q}} \to \edit{\uspace_M}$ be the tropical projection to the \edit{space of $M$-ultrametrics}. 
    For all $x, y \in \edit{\TPT{q}}$,
    \begin{equation}
        \label{eqn: non-expansive}
        \dtr(\pi_M(x), \pi_M(y)) \leq \dtr(x, y).
    \end{equation}
\end{lemma}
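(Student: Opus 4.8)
The plan is to derive the inequality from two structural properties of the projection map, both of which are immediate from the description of $\pi_M$ as the subdominant $M$-ultrametric in \Cref{prop: poset projection property}: \emph{coordinatewise monotonicity} (if $x \le x'$ in every coordinate, then $\pi_M(x) \le \pi_M(x')$) and \emph{invariance under translation by $\qq$} (for every $c \in \R$, $\pi_M(x + c\qq) = \pi_M(x) + c\qq$). Granting these, the symmetric metric can be ``sandwiched'' and the result falls out with essentially no further work.

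First I would record the two properties. For monotonicity, note that $\pi_M(x)$ is the largest $M$-ultrametric lying below $x$ in all coordinates; if $x \le x'$, then $\pi_M(x)$ is also an $M$-ultrametric below $x'$, hence $\pi_M(x) \le \pi_M(x')$ by maximality. For translation invariance, recall from the excerpt that $M$-ultrametrics are closed under translation by $\qq$. Thus $u$ is an $M$-ultrametric below $x + c\qq$ if and only if $u - c\qq$ is an $M$-ultrametric below $x$, and taking the coordinatewise maximum over each side gives $\pi_M(x + c\qq) = \pi_M(x) + c\qq$. (Alternatively, both properties follow directly from the formula $\pi_M(x)_k = \max_i\big(\lambda_i(x) + (v_i)_k\big)$ with $\lambda_i(x) = \min_e (x - v_i)_e$ from \Cref{defn: tropical projection}, since each $\lambda_i$ is monotone and satisfies $\lambda_i(x + c\qq) = \lambda_i(x) + c$.)

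To finish, set $m = \min_k (y_k - x_k)$ and $M = \max_k (y_k - x_k)$, so that $\dtr(x,y) = M - m$ by \eqref{eq:sym_dist}. These bounds are exactly the coordinatewise inequalities $x + m\qq \le y \le x + M\qq$. Applying $\pi_M$ and using monotonicity together with translation invariance yields
\[
\pi_M(x) + m\qq \;\le\; \pi_M(y) \;\le\; \pi_M(x) + M\qq,
\]
so that $m \le \pi_M(y)_k - \pi_M(x)_k \le M$ for every coordinate $k$. Consequently $\max_k\big(\pi_M(y)_k - \pi_M(x)_k\big) - \min_k\big(\pi_M(y)_k - \pi_M(x)_k\big) \le M - m = \dtr(x,y)$, which is precisely \eqref{eqn: non-expansive}. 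The argument is short, and I expect the only point requiring genuine care to be the clean derivation of the two properties — specifically, confirming that translation by $\qq$ preserves the class of $M$-ultrametrics so that translation invariance holds as an exact equality rather than a one-sided inequality, and keeping the directions of the sandwich inequalities straight; the concluding step is then routine.
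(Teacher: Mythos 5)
Your proof is correct, but it takes a genuinely different route from the paper's. The paper argues directly from the Develin--Sturmfels expression $\pi_M(x) = \bigoplus_i \lambda_i(x) \odot v_i$ over the tropical vertices of $\berg(M)$: for each coordinate $j$ it chooses vertices $v_k, v_l$ attaining the maxima defining $\pi_M(x)_j$ and $\pi_M(y)_j$, and a short chain of inequalities on the coefficients $\lambda_i = \min_e (x - v_i)_e$ yields $\min_e(x_e - y_e) \le \pi_M(x)_j - \pi_M(y)_j \le \max_e(x_e - y_e)$. You instead abstract two properties from the subdominant characterization (\Cref{prop: poset projection property}) --- coordinatewise monotonicity and equivariance under translation by $\qq$ --- and run the classical Crandall--Tartar-style sandwich $x + m\qq \le y \le x + M\qq$. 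Both arguments terminate at the same coordinatewise bounds, so the proofs are equally short, but they buy different things: your version makes transparent that non-expansiveness has nothing to do with the particular formula for $\pi_M$ --- \emph{any} order-preserving, $\qq$-equivariant self-map of $\TPT{q}$ is non-expansive in $\dtr$, which covers subdominant-type projections in much greater generality --- while the paper's computation never invokes the order-theoretic description and so applies verbatim to the projection onto an arbitrary tropical polytope $\tconv(v_1, \ldots, v_r)$, staying self-contained at the level of \Cref{defn: tropical projection} (note that \Cref{prop: poset projection property} is specific to Bergman fans, so your primary argument is the less general of the two on that axis, though your parenthetical derivation via the $\lambda_i$ formula restores full generality). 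Your handling of the one delicate point is sound: translation equivariance is indeed an exact equality, since the paper records that $w \odot a$ is an $M$-ultrametric for every $a$, and the shift by $c\qq$ is invertible, giving the needed two-sided correspondence between ultrametrics below $x + c\qq$ and those below $x$. One cosmetic fix: rename your $\max_k(y_k - x_k)$, since the letter $M$ already denotes the matroid.
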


\begin{proof}
    Recall that $V_M$ is the vertex set of $\edit{\uspace_M}$ as a tropical polytope \rot{(\Cref{prop: bergman vertices})}.
    Let $\lambda^x_i, \lambda^y_i$ be the coefficients of $v_i \in V_M$ in the tropical expressions for $\pi_M(x), \pi_M(y)$. 
    Fix vertices $v_k,v_l \in V_M$ such that $\pi_M(x)_j = \lambda_k^x + v_{kj} = \max_{i} \lambda_i^x + v_{ij}$ and $\pi_M(y)_j = \lambda^y_l + v_{lj} = \max_{i} \lambda_i^y + v_{ij}$.
    Then
    \begin{align*}
        \pi_M(x)_j - \pi_M(y)_j &= \lambda_k^x + v_{kj} - \lambda^y_l - v_{lj}, \\
                &\geq \lambda^x_l + v_{lj} - \lambda^y_l - v_{lj}, \\
                &\geq \textstyle \min_e (x_e-v_{le}) - \textstyle \min_f (y_f-v_{lf}), \\
                &\geq \textstyle \min_e (x_e-v_{le} - y_e + v_{le}), \\
                &= \textstyle \min_e (x_e- y_e).
    \end{align*}
    \edit{Similarly}, we can show that:
    \[
        \pi_M(x)_j - \pi_M(y)_j \leq \textstyle \max_e (x_e - y_e).
    \]
    Hence we conclude that $\dtr(\pi_M(x), \pi_M(y)) \leq \dtr(x, y).$
\end{proof}

\subsection{Tropical Projection Safety Radius}
\label{subsec:proj_safety_radius}

Here we \edit{generalize} the safety radius \rot{for phylogenetic reconstruction \edit{defined in \citep{atteson1999performance}}} to \edit{the projection $\pi_M$ to the} Bergman fan \edit{of an arbitrary matroid}.
\rot{The classical safety radius measures how much a tree metric can be perturbed, assuming that the phylogenetic reconstruction method recovers the same tree topology.}
The topology of $T$ is encoded by \rot{the cone of $\mathcal T_p$ which contains $D_T$ in its relative interior.}
\edit{Thus, the question is: for $w \in \uspace_M$, when is $\pi_M(w + \epsilon)$ contained in the relative interior of $K^w$?}

The tolerance for noise depends on how close the input $w \in \edit{\uspace_M}$ is to the boundary of $K^w$.
To measure this distance, we use the following notation.
For a set $\edit{U}$, the \emph{first maximum of $\edit{U}$}, denoted $\max^1 \edit{U}$, is the usual maximum, while the \emph{second maximum of $\edit{U}$}, denoted $\max^2 \edit{U}$, is the second greatest value in $\edit{U}$.
When \edit{we} write $\max^i w$ \edit{for} $w \in \R^q$, \edit{we} mean the $i^{th}$ maximum of the entries of $w$ as a set.
In particular, for non-constant $w$, $\max^1 w$ is always strictly greater than $\max^2 w$.
Similarly, we use $\min^i$ to denote the $\ith$ minimum.

\begin{lemma}
    \label{lemma: dist to cone boundary}
    Let $M$ be any matroid, and let $K$ be a maximal cone in the \edit{nested set} fan $\nested(M)$. For any $w \in \relint K$, the tropical distance from $w$ to the boundary $\partial K$ is $\wmin \coloneqq \min_{C} \left( \max^1_{e\in C} w_e - \max^2_{e \in C} w_e \right)$, where this minimum is taken over all $C$ such that $\max^1_{e\in C} w_e - \max^2_{e \in C} w_e \neq 0$.
\end{lemma}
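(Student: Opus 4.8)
The plan is to establish the two inequalities $\dtr(w,\partial K)\geq\wmin$ and $\dtr(w,\partial K)\leq\wmin$ separately, translating membership in $\relint K$ and in $\partial K$ into statements about circuit argmax sets via \Cref{lem: nested argmax}. Throughout I would write $w=\sum_{F\in\nset}-\lambda_F\qq_F$ with every $\lambda_F>0$, where $\nset$ is the maximal nested set with $K=K_\nset$. Since $K$ is maximal, $w\notin\R\qq$, so (as $M$ is connected) some circuit is non-constant on $w$, and $\wmin$ is a minimum over a nonempty set.

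For the lower bound I would show that every $w'\in K$ with $\dtr(w,w')<\wmin$ still lies in $\relint K$. Because $w'\in K=\overline{K_\nset}$, it lies in a face $K_{\nset'}$ with $\nset'\subseteq\nset$, so \Cref{lem: nested argmax} gives $\argmax_{e\in C}w_e\subseteq\argmax_{e\in C}w'_e$ for every circuit $C$, and it suffices to rule out strict growth. If some $g\in C\setminus\argmax_{e\in C}w_e$ attained the $w'$-maximum on $C$, then picking $e\in\argmax_{e\in C}w_e$ we would have $w_e-w_g\geq\max^1_{e\in C}w_e-\max^2_{e\in C}w_e\geq\wmin$, while the difference vector $d=w'-w$ satisfies $d_e-d_g\geq-\dtr(w,w')>-\wmin$; adding these forces $w'_e-w'_g>0$, contradicting that $g$ is a maximizer. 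Hence the argmax sets are preserved, $w'\in\relint K$, and no boundary point lies within distance $\wmin$.

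For the upper bound I would first observe that the lower bound already yields the auxiliary estimate $\lambda_F\geq\wmin$ for every $F\in\nset$: the point $w+\lambda_F\qq_F$ equals $\sum_{F'\neq F}-\lambda_{F'}\qq_{F'}\in K_{\nset\setminus\{F\}}\subseteq\partial K$, a boundary point at tropical distance $\lambda_F$, which must therefore be at least $\wmin$. Now let $C^*$ realize $\wmin$, with top value $a=\max^1_{e\in C^*}w_e$ and second value $b=\max^2_{e\in C^*}w_e$, and choose $e^*$ in the argmax and $f^*$ at level $b$. Using $\argmax_{e\in C^*}w_e=C^*\setminus\bigcup_{C^*\not\subset F\in\nset}F$ from the proof of \Cref{lem: nested argmax}, the element $e^*$ lies in no non-covering flat, so the flats separating $e^*$ from $f^*$ are exactly the non-covering flats of $\nset$ through $f^*$, giving $\wmin=a-b=w_{e^*}-w_{f^*}=\sum_i\lambda_{F_i}$ over those flats $F_i$. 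The auxiliary estimate forces a single such flat: two or more positive terms, each at least $\wmin$, cannot sum to $\wmin$. Writing $F^*$ for this unique flat we get $\lambda_{F^*}=\wmin$, and I would take $w'=w+\wmin\,\qq_{F^*}=\sum_{F\in\nset\setminus\{F^*\}}-\lambda_F\qq_F\in K_{\nset\setminus\{F^*\}}\subseteq\partial K$. As $F^*$ is a proper flat, $\wmin\,\qq_{F^*}$ takes only the values $\wmin$ and $0$, so $\dtr(w,w')=\wmin$; and since $f^*$ lies in no other non-covering flat, removing $F^*$ places $f^*$ in $\argmax_{e\in C^*}w'_e$, confirming that $w'$ truly lies on $\partial K$.

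The main obstacle is the combinatorial heart of the upper bound, namely showing that the minimal gap $\wmin$ is realized by a \emph{single} generator coefficient $\lambda_{F^*}$ rather than by a sum of several. This rests entirely on the auxiliary bound $\lambda_F\geq\wmin$, which is what rules out the case of two or more non-covering flats through $f^*$ and lets us reach $\partial K$ along one cone generator at cost precisely $\wmin$. Verifying this bound and the accompanying argmax bookkeeping — in particular that moving by $\wmin\,\qq_{F^*}$ lands exactly on the facet and grows the argmax of $C^*$ and no further — is where the care is concentrated; the two metric inequalities themselves are then routine.
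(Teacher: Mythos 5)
Your proof is correct, and its upper-bound half takes a genuinely different route from the paper's. For the lower bound you argue essentially as the paper does — a two-term estimate on a circuit whose argmax would otherwise grow under perturbation — though you restrict attention to points $w' \in K$ where the paper handles arbitrary $u \in \berg(M)$; both suffice since $\partial K \subseteq K$. For the upper bound, the paper constructs the witness $u = w + \wmin \qq_B$, where $B$ is the set of \emph{all} coordinates of $[q]$ sitting at the second-max value of a minimizing circuit (note $B$ need not be a flat), and then verifies $u \in \partial K$ by a four-case analysis of $\argmax_{e \in C} u_e$ over every circuit $C$. You instead work in the generator coordinates of the simplicial cone: you bootstrap the already-proved lower bound to get $\lambda_F \geq \wmin$ for every generator (since $w + \lambda_F \qq_F$ lies on the proper face $K_{\nset\setminus\{F\}}$ at distance $\lambda_F$), use the identity $\argmax_{e\in C^*} w_e = C^* \setminus \bigcup_{C^*\not\subset F \in \nset} F$ to write $\wmin = w_{e^*} - w_{f^*} = \sum_i \lambda_{F_i}$ over the non-covering flats through $f^*$ (the flats through $e^*$ all contain $C^*$ and cancel), and conclude there is exactly one such flat $F^*$, with $\lambda_{F^*} = \wmin$, so that $w + \wmin\,\qq_{F^*}$ lands on the facet $K_{\nset\setminus\{F^*\}} \subseteq \partial K$. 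Your argument leans on two ingredients beyond the bare statement of \Cref{lem: nested argmax}: the explicit argmax formula, which is established inside that lemma's \emph{proof} rather than in its statement, and the fact that distinct nested sets index distinct cones with disjoint relative interiors (so that $K_{\nset\setminus\{F\}}$ really is a proper face) — both are available in the paper's setup, since the paper asserts the bijection between nested sets and cones and gives the parametrization of $\relint K_\nset$. In exchange, your route yields a sharper structural byproduct that the paper's case analysis does not make visible: $\dtr(w, \partial K) = \min_{F \in \nset} \lambda_F$, i.e., the boundary is always reached at minimal cost along a single ray of the cone, whereas the paper's witness $\qq_B$ is a coordinate-level perturbation that need not move along a generator. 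One pedantic caveat: your auxiliary estimate should be read as applying only to proper flats, since $\qq_{[q]} \equiv 0$ in $\TPT{q}$; the flats you actually apply it to are non-covering (they omit $e^*$), hence proper, so nothing breaks.
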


As $w$ is in the relative interior of a maximal cone, it is not the constant vector. We also know that $M$ is connected, so there must be some circuit on which $C$ is not constant. This ensures that $\wmin$ is well defined and positive. 

\begin{proof} 
    Let $u$ be any point in $\edit{\uspace_M}$, and let $K^\prime$ be the cone of $\nested(M)$ containing $u$ in its relative interior.
    Recall that $K \subseteq K^\prime$ if and only if for all circuits $C$, $\argmax_{e \in C} u_e \subseteq \argmax_{e \in C} w_e$.
    Since $K$ is a maximal cone, this is equivalent to $K = K^\prime$.
    
    We first show that if $\dtr(u,w) < \wmin$, then $u \in \relint K$.
    We proceed by contradiction.
    Suppose there is a circuit $C$ of $M$ and an index $a \in [q]$ such that $u_a$ is maximal for $C$, but $w_a$ is not maximal for $C$. 
    Let $b \in \argmax_{e \in C} w_e$. As $w$ is not constant on $C$, we have that $$\wmin \leq \textstyle\max^1_{e\in C} w_e - \textstyle\max^2_{e \in C} w_e \leq w_b - w_a.$$ \edit{Since} $\dtr(u,w) < \wmin$:
    \begin{align*}
        u_a - u_b &< w_a -w_b + \wmin \leq w_a -w_b + (w_b - w_a) = 0.        
    \end{align*}
    This implies $u_a < u_b$, contradicting the maximality of $u_a$. 
    Thus, we conclude $u \in \relint  K$.

    It remains to show that there is some $u \in \partial K$ such that $\dtr(u,w) = \wmin$. 
    We construct such a $u$ explicitly. 
    Define $C$ to be a circuit of $M$ such that $\textstyle\max^1_{e\in C} w_e - \textstyle\max^2_{e \in C} w_e = \wmin$, which exists as $w$ is not constant and $M$ is connected. Let $B$ be the set of all indices $b \in [q]$ for which $w_b = \max^2_{e \in C} w_e$.
    We define $u := w + \wmin \qq_B$.
    For an example of this construction with equidistant trees, see \Cref{fig:tree_dist_to_boundary}.

    By construction, $\dtr(w,u) = \wmin$. 
    We will now show that $u \in \partial K$. 
    For each circuit $C$ of $M$, we have four cases to consider:
    \begin{enumerate}
        \item $B \cap C = \emptyset$,
        \item $B \cap C \neq \emptyset, \max_{e \in C} w_e \geq w_b + \wmin$,
        \item $B \cap C \neq \emptyset, w_b < \max_{e \in C} w_e < w_b + \wmin$,
        \item $B \cap C \neq \emptyset, w_b = \max_{e \in C} w_e$.
    \end{enumerate}
    In case (1), $B \cap C = \emptyset$, and thus $u_e = w_e$ for all $e \in C$. In particular, that implies $\argmax_{e \in C} u_e = \argmax_{e \in C} w_e$.
    In case (2), $\max_{e \in C} w_e$ is unchanged, although more coordinates may achieve the maximum. Thus, $\argmax_{e \in C} u_e \supseteq \argmax_{e \in C} w_e$.
    Case (3) is not possible, since that would imply $\wmin > \max_{e \in C} w_e - w_b$, and that contradicts the definition of $\wmin$.
    Finally, in case (4), $B \cap C = \argmax_{e \in C} w_e$. Thus, the maximum increases: $\max_{e \in C} u_e = \max_{e \in C} w_e + \wmin$; but the coordinates achieving the maximum remain the same: $\argmax_{e \in C} u_e = \argmax_{e \in C} w_e$.

    Hence, for every circuit we have $\argmax_{e \in C} u_e \supseteq \argmax_{e \in C} w_e$.
    Therefore, $u \in K$.
    However, for the circuit $C^\prime$, the containment $\argmax_{e \in C^\prime} u_e \supset \argmax_{e \in C^\prime} w_e$ is strict, so $u \notin \relint K$.
    It follows that $u \in \partial K$.
\end{proof}

\begin{figure}
    \centering
  \begin{subfigure}[b]{.33\textwidth}
    \centering
        \begin{tikzpicture}
            \node {root} [level distance = 1cm, sibling distance = 1cm]
                child {node [yshift = -2cm, xshift = -1cm] {$A$} edge from parent} 
                child {node {}
                    child {node [yshift = -1cm, xshift = -0.5cm] {$D$}}
                    child {node {}
                        child {node {$B$}}
                        child {node {$C$}}
                        edge from parent node[right] {$l$}
                        }
                    edge from parent
                };
        \end{tikzpicture}
      \end{subfigure}
  \begin{subfigure}[b]{.33\textwidth}
    \centering
        \begin{tikzpicture}
            \node {root} [level distance = 1cm, sibling distance = 1cm]
                child {node [yshift = -2cm, xshift = -1cm] {$A$} edge from parent} 
                child {node {}
                    child {node [yshift = -1cm] {$D$}}
                    child {node [yshift = -1cm] {$B$}}
                    child {node [yshift = -1cm] {$C$}}
                    edge from parent
                };
        \end{tikzpicture}
  \end{subfigure}
    \caption{\edit{When $M=M(K_p)$ is the matroid of the complete graph on $p$ vertices,  $w \in \uspace_M$ corresponds to an equidistant tree. When $w$ is in the relative interior of a maximal cone of $\nested(M)$, the corresponding tree is binary, as exemplified by the tree on the left. In this case, the value $\wmin$ is exactly the tree's minimal internal edge length, $l$.} The right equidistant non-binary tree has metric $u$, which has tropical distance $l$ from $w$, as constructed in the proof of \cref{lemma: dist to cone boundary}.}
    \label{fig:tree_dist_to_boundary}
\end{figure}

\Cref{lemma: dist to cone boundary} motivates the following definition of safety radius for $M$-ultrametrics.

\begin{definition}[Safety Radius]
    \label{defn: safety radius}
    \edit{Let $\pi_M: \TPT{q} \to \edit{\uspace_M}$.} The safety radius of \edit{$\pi_M$} is the largest $s \in \R_{\geq 0}$ such that for every $w$ in the relative interior of a maximal cone $K$ of \edit{$\nested (M)$}:
    \[ \pi_M(w + \epsilon) \text{ is in the relative interior of }K \text{ for every } \epsilon \in (-s \wmin,s \wmin)^{q}, \] 
    where 
    \edit{
    $$\wmin = \min_{C} \left( \mathrm{max}^1_{e \in C} w_e - \mathrm{max}^2_{e \in C} w_e \right).$$
    }
\end{definition}

In introducing the safety radius of phylogenetic reconstruction methods, \edit{Atteson} showed that the maximal possible safety radius is $1/2$ \citep{atteson1999performance}.
Single-linkage has a safety radius of exactly $1/2$ \citep{gascuel2004performance}. Single-linkage coincides with $\pi_{M(K_p)}$ up to translation by $\qq$ \citep{ardila2005subdominant}, so it is natural to extend their safety radius result to projections to general Bergman fans. This result follows directly from \cref{lemma: dist to cone boundary}.

\begin{theorem}
    Let $M$ be a matroid on a ground set with $q$ elements. 
    The safety radius of the tropical projection $\pi_M: \TPT{q} \rightarrow \edit{\mathcal{U}_M}$ is $1/2$.
\end{theorem}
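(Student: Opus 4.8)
The plan is to prove the two inequalities $s \geq 1/2$ and $s \leq 1/2$ separately, where $s$ denotes the safety radius of $\pi_M$. Both will follow from \Cref{lemma: dist to cone boundary} together with the non-expansiveness of the projection (\Cref{lem: non-expansive}), so the theorem really does follow directly from the distance-to-boundary lemma.

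For the lower bound $s \geq 1/2$, I fix $w \in \relint K$ for a maximal cone $K$ and fix $\epsilon \in (-\tfrac12 \wmin, \tfrac12 \wmin)^q$. Since $w$ is itself an $M$-ultrametric, \Cref{prop: poset projection property} gives $\pi_M(w) = w$. The plan is then to bound the tropical distance travelled by the projection. From the definition in \eqref{eq:sym_dist}, $\dtr(w, w + \epsilon) = \max_i \epsilon_i - \min_i \epsilon_i \leq 2\|\epsilon\|_\infty < \wmin$, so non-expansiveness gives $\dtr(\pi_M(w+\epsilon), w) = \dtr(\pi_M(w+\epsilon), \pi_M(w)) \leq \dtr(w+\epsilon, w) < \wmin$. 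Now I invoke the implication established inside the proof of \Cref{lemma: dist to cone boundary}: any point of $\berg(M)$ within tropical distance strictly less than $\wmin$ of $w$ lies in $\relint K$. Applying this to $\pi_M(w+\epsilon) \in \berg(M)$ shows the projection stays in $\relint K$, so $1/2$ is a valid safety radius.

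For the upper bound $s \leq 1/2$, I will exhibit, for every $s > 1/2$, a single $w$ and perturbation $\epsilon_0$ with $\|\epsilon_0\|_\infty < s\wmin$ whose projection escapes $\relint K$. The key observation is that the boundary point $u = w + \wmin\,\qq_B \in \partial K$ constructed in the proof of \Cref{lemma: dist to cone boundary} can be recentred modulo $\R\qq$. I set $\epsilon_0 := \wmin\,\qq_B - \tfrac12\wmin\,\qq$, so that $\epsilon_0$ equals $+\tfrac12\wmin$ on $B$ and $-\tfrac12\wmin$ elsewhere, giving $\|\epsilon_0\|_\infty = \tfrac12\wmin$. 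Then $w + \epsilon_0 = u - \tfrac12\wmin\,\qq$ represents the same point as $u$ in $\TPT{q}$, and since $\pi_M$ is defined on $\TPT{q}$ and fixes the $M$-ultrametric $u$, I obtain $\pi_M(w+\epsilon_0) = u \in \partial K$, which is not in $\relint K$. For any $s > 1/2$ one has $\|\epsilon_0\|_\infty = \tfrac12\wmin < s\wmin$, so $\epsilon_0 \in (-s\wmin, s\wmin)^q$ witnesses the failure of $s$ as a safety radius; hence $s \leq 1/2$, and combining the two bounds gives $s = 1/2$.

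The substantive step is the upper bound, and in particular recognising the factor-of-two gap between the $\ell_\infty$-norm of $\epsilon$ and the tropical distance $\dtr(w, w+\epsilon)$: the inequality $\dtr(w,w+\epsilon) \le 2\|\epsilon\|_\infty$ becomes an equality exactly when $\epsilon$ is balanced so that $\max_i \epsilon_i = -\min_i \epsilon_i$, which is precisely what recentring $\wmin\,\qq_B$ by $\tfrac12\wmin\,\qq$ achieves. I expect no further obstacle, since both the existence of the nearest boundary point $u$ at tropical distance $\wmin$ and the implication ``$\dtr(\cdot,w)<\wmin \Rightarrow \relint K$'' are already supplied by \Cref{lemma: dist to cone boundary}. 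The only point to check carefully is that the open interval in \Cref{defn: safety radius} excludes the value $\|\epsilon_0\|_\infty = \tfrac12\wmin$ at the threshold $s = 1/2$, which is exactly why the radius is attained as $1/2$ rather than being strictly smaller.
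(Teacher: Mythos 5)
Your proof is correct and follows essentially the same route as the paper's: both bounds come from \Cref{lemma: dist to cone boundary} combined with non-expansiveness (\Cref{lem: non-expansive}), with the factor-of-two gap between $\|\epsilon\|_\infty$ and $\dtr(w,w+\epsilon)$ supplying the value $1/2$. The only cosmetic difference is that the paper handles the recentring modulo $\R\qq$ once and for all, by noting that the boxes $(-s\wmin,s\wmin)^q$ sweep out exactly the tropical ball of radius $2s\wmin$ around $w$, whereas you carry out the same recentring explicitly on the boundary witness $u = w + \wmin\,\qq_B$.
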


\begin{proof}
    \edit{Fix a point $w \in \edit{\uspace_M}$, and let $v \in \TPT{q}$ and $s > 0$. Note that $\dtr(w, v) < 2s\wmin$ if and only if there exists $\epsilon \in (-s \wmin, s\wmin)^q$ such that $w + \epsilon \sim v$.}
    As $\pi_M$ is a function of the equivalence classes of $\TPT{q}$, it therefore suffices to show that $1/2$ is the largest $s$ such that $\pi_M(v)$ is in the relative interior of $K$ for every $v$ such that $\dtr(w,v) < 2 s \wmin$.

    \edit{If $s = 1/2$, then} $v$ satisfies $\dtr(\edit{w, v}) < \wmin$, \edit{and} by \Cref{lem: non-expansive} $\dtr(\edit{w, \pi_M(v)}) < \wmin$. Together with \Cref{lemma: dist to cone boundary}, this implies that $\pi_M(v) \in \relint K$. On the other hand, by \Cref{lemma: dist to cone boundary}, there are points on the boundary $\partial K$, which are exactly distance $\wmin$ from $w$ in the tropical metric. Therefore, we conclude that:
    \[
        1/2 = \sup \{ s: \forall \, \epsilon \in (-s\wmin, s\wmin)^q, \pi_M(w + \epsilon) \in \relint K \}. \qedhere
    \]
\end{proof}

\begin{example}
    \edit{
    We continue with $M=M(G)$, $K_{S_1}\subset K$, and $w \in \relint K_{S_1}$ from \cref{ex:refinement}. 
    Using \cref{defn: safety radius}, we compute $w_{\min}=2$. 
    Suppose $\epsilon=3/4(1,-1,-1,1,1,-1,-1,-1)$ and $w'=w+\epsilon$. Then $\pi_M(w')=1/4(17,17,2,11,11,-3,9,9)$, which is still in $\relint K_{S_1}$. If instead we consider the maximal cones of the cyclic Bergman fan, we find that $\pi_M(w')$ is contained in a different maximal cone than $w$. This is indicated by the flip from $w_4<w_7$ in $w$ to $w_4>w_7$ in $\pi_M(w')$. Thus, $w_{\min}$ depends on the how one elects to refine $\berg(M)$.
    }
\end{example}

\section{Safety Radius for Fermat--Weber Points}
\label{sec:FW_Safety}

In this section, we study the safety radius of \edit{an $M$-ultrametric} estimate.
\edit{An \emph{$M$-ultrametric} estimates takes a sample from $\TPT{q}$ and outputs a single $M$-ultrametric or set of $M$-ultrametrics that summarize the data.}
\rot{We will consider the $M$-ultrametric estimate given by tropical projections of tropical Fermat-Weber points (see \Cref{defn: our method}).}
The tropical projection is necessary because even \edit{for} data in \edit{$\mathcal{U}_M$}, the Fermat-Weber points of the data may lie outside of \edit{$\mathcal{U}_M$} \citep{lin2017convexity, Lin2016TropicalFP}.

\begin{definition}[Fermat-Weber function, Fermat-Weber points]
    \label{def:FW}
    For $S = \{ \sample \} \subset X$, where $X$ is a metric space with distance $d$, the corresponding \emph{Fermat-Weber function} is 
    \[ f_S(x) := \frac{1}{n}\sum_{i=1}^n d(x,v_i). \]
    A \emph{Fermat-Weber point for $S$ with respect to $d$} is any point $x^*$ minimizing the Fermat-Weber function on $X$:
    \begin{equation*}
        \label{eq:FW_point}
        x^* \in \argmin\limits_{x \in X} f_S(x).
    \end{equation*}
    In general, Fermat-Weber points are not unique.
    Thus, we denote the \emph{set of all Fermat-Weber points} of $S$ by $\fw(S) = \fw(\sample)$.
\end{definition}

We note that when $d$ is the tropical metric, $f_S$ is convex (in the classical sense) for any dataset $S ~\edit{\subset \TPT{q}}$.
We use the Fermat-Weber points under the tropical metric to define an $M$-ultrametric estimate.

\begin{definition}
    \label{defn: our method}
    A \emph{Fermat-Weber $M$-ultrametric point} for data $\sample \in \edit{\TPT{q}}$ is any point in $\pi_M(\fw(\sample))$, where $\fw(\sample)$ is taken with respect to the tropical metric.
    The method that computes $\pi_M(\fw(\sample))$ is called the \emph{Fermat-Weber $M$-ultrametric method}.
\end{definition}

\rot{First, we will show that for a dataset \edit{$S\subset\mathcal{U}_M$}, the intersection of $\fw(S)$ with \edit{$\mathcal{U}_M$} is exactly the projection of $\fw(S)$ in $\TPT{q}$ onto \edit{$\mathcal{U}_M$}.}

\begin{theorem}
    \label{thm: symmetric tropical median}
    \edit{For any matroid $M$, and} data $S = \{ \sample \}~\edit{\subset \mathcal{U}_M}$, we have
    \begin{equation}
        \label{eqn: symmetric tropical median}
        \fw(\sample) \cap \edit{\mathcal{U}_M} = \pi_M (\fw(\sample)).
    \end{equation}
    In particular, $\fw(\sample) \cap \edit{\mathcal{U}_M} \neq \emptyset$.
\end{theorem}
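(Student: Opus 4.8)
The plan is to prove the set equality by establishing both inclusions, after first recording that $\fw(\sample)$ is nonempty. Existence of a Fermat-Weber point follows because $f_S$ is convex (as noted in the text) and coercive on $\TPT{q}$, so its minimum is attained; alternatively one may invoke the description of $\fw(\sample)$ as a classical polytope \citep{Lin2016TropicalFP}. The two inclusions have quite different characters: one is essentially immediate from the fact that $\pi_M$ fixes $\berg(M)$, while the other is where the hypothesis $v_i \in \berg(M)$ does all the work, through the non-expansiveness of $\pi_M$.

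For the inclusion $\fw(\sample) \cap \berg(M) \subseteq \pi_M(\fw(\sample))$, I would take any $x^\ast \in \fw(\sample) \cap \berg(M)$. Since $x^\ast$ is itself an $M$-ultrametric, \Cref{prop: poset projection property} gives $\pi_M(x^\ast) = x^\ast$ (the unique maximum $M$-ultrametric below $x^\ast$ is $x^\ast$ itself). As $x^\ast \in \fw(\sample)$, this exhibits $x^\ast = \pi_M(x^\ast)$ as an element of $\pi_M(\fw(\sample))$, giving the inclusion.

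For the reverse inclusion $\pi_M(\fw(\sample)) \subseteq \fw(\sample) \cap \berg(M)$, which is the substantive direction, I would start from an arbitrary $x^\ast \in \fw(\sample)$ and show that $\pi_M(x^\ast)$ is again a Fermat-Weber point. The key observation is that each datum is fixed by the projection: since $v_i \in \berg(M)$, \Cref{prop: poset projection property} gives $\pi_M(v_i) = v_i$. Applying non-expansiveness (\Cref{lem: non-expansive}) to the pair $x^\ast, v_i$ then yields $\dtr(\pi_M(x^\ast), v_i) = \dtr(\pi_M(x^\ast), \pi_M(v_i)) \leq \dtr(x^\ast, v_i)$ for every $i$. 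Averaging over $i$ gives $f_S(\pi_M(x^\ast)) \leq f_S(x^\ast)$. But $x^\ast$ minimizes $f_S$, so the reverse inequality holds as well, forcing equality; hence $\pi_M(x^\ast)$ is also a minimizer, that is, $\pi_M(x^\ast) \in \fw(\sample)$. Since $\pi_M(x^\ast) \in \berg(M)$ automatically, we obtain $\pi_M(x^\ast) \in \fw(\sample) \cap \berg(M)$. Combined with nonemptiness of $\fw(\sample)$, this same inclusion immediately delivers the final claim $\fw(\sample) \cap \berg(M) \neq \emptyset$.

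The argument has no real obstacle beyond assembling these pieces; the only point requiring care is that the hypothesis of data in $\berg(M)$ is used exactly once, but essentially, namely to turn non-expansiveness between $x^\ast$ and $\pi_M(v_i)$ into a genuine decrease of the Fermat-Weber cost $\sum_i \dtr(\,\cdot\,, v_i)$. Without the equalities $v_i = \pi_M(v_i)$, the averaging step would only compare $\dtr(\pi_M(x^\ast), \pi_M(v_i))$ with $\dtr(x^\ast, v_i)$ and say nothing about $f_S$, which is precisely why the analogous statement fails for general data and for the unprojected Fermat-Weber set \citep{lin2017convexity}.
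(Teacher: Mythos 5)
Your proposal is correct and takes essentially the same approach as the paper: the easy inclusion via $\pi_M$ fixing points of $\berg(M)$, and the substantive inclusion via non-expansiveness (\Cref{lem: non-expansive}) combined with $\pi_M(v_i) = v_i$ to get $f_S(\pi_M(x^\ast)) \leq f_S(x^\ast)$, forcing equality at a minimizer. Your explicit justification of nonemptiness of $\fw(\sample)$ (coercivity of the convex function $f_S$, or the polytope description of \citep{Lin2016TropicalFP}) merely fleshes out a step the paper asserts without comment.
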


\begin{proof}
    Note that every point in $\fw(v_1, \dots, v_n) \cap \edit{\mathcal{U}_M}$ is fixed under the projection $\pi_M$, and hence $\fw(v_1, \dots, v_n) \cap \edit{\mathcal{U}_M} \subseteq \pi_M (\fw(\sample))$.

    Next, we show that $\pi_M(\fw(v_1,\dots, v_n))$ is contained in $\fw(v_1, \dots, v_n) \cap \edit{\mathcal{U}_M}$. 
    For each $i = 1, \ldots, n$:
    \[ \dtr(x, v_i) \geq \dtr(\pi_M(x), \pi_M(v_i)) = \dtr(\pi_M(x), v_i) \text{ for all } x \in \TPT{q}. \]
    The inequality follows from the non-expansivity of $\pi_M$ (\Cref{lem: non-expansive}), and the equality comes from the the fact that $v_i \in \edit{\mathcal{U}_M}$.
    Thus, $f_S(\pi_M(x)) \leq f_S(x)$.
    If $x$ is a Fermat-Weber point of $S$, then $f_S(x)$ is already minimized, so $f_S(\pi_M(x)) = f_S(x) = \min_{y \in \TPT{q}} f_S(y)$.
    This means $\pi_M(x)$ is also a Fermat-Weber point.
    Hence, $\pi_M(\fw(v_1,\dots, v_n)) \subseteq \fw(v_1, \dots, v_n) \cap \edit{\mathcal{U}_M}$.

    The set of Fermat-Weber points is non-empty\edit{.  Therefore, we have $$\pi_M(\fw(\sample)) = \fw(\sample) \cap \edit{\mathcal{U}_M} \neq \emptyset. \qedhere$$}
\end{proof}

This result enables us to compute the Fermat-Weber points in $\TPT{q}$, which is a classically convex optimization problem, and then project to $\edit{\mathcal{U}_M}$, rather than optimizing $f_S$ over the polyhedral fan $\edit{\uspace_M}$. 
\edit{If $S\not \subset \mathcal{U}_M$, it is possible that $\fw(S)\cap \mathcal{U}_M =\emptyset$, as \Cref{ex: S not in UM} demonstrates. Thus, when $S\subset \mathcal{U}_M$ is perturbed by even a small amounts of noise,  $\pi_M\!\left(\fw(S+\epsilon) \right) \subseteq \fw(S + \epsilon)$ may not hold.}
\begin{example}
    \label{ex: S not in UM}
    \edit{Let $M$ be the matroid in our running example and $S=\{v_1,v_2,v_3\}\subset \mathcal{U}_M$ be given by the rows in the left matrix below.
    \begin{equation*}
        S=\begin{pmatrix}
            5 & 5 & 1 & 2 & 2 & 0 & 3 & 3\\
            5 & 5 & 1 & 2 & 2 & 0 & 4 & 4\\
            5 & 5 & 1 & 2 & 2 & 0 & 5 & 5
        \end{pmatrix}, \quad
        S + \epsilon=\begin{pmatrix}
            5 & 5 & 1.01 & 2 & 2.01 & -0.01 & 2.99 & 2.99\\
            5.01 & 4.99 & 1 & 1.99 & 2.01 & 0 & 4 & 4\\
            5.01 & 5 & 0.99 & 2.01 & 2 & 0 & 5 & 4.99
        \end{pmatrix}
    \end{equation*}
    The unique tropical Fermat-Weber point of $S$ is $v_2$ \cite[Lemma~8]{Lin2016TropicalFP}.
    Hence, $\pi_M\!\left(\fw(S) \right)= \{v_2\}$. Let $\epsilon$ be such that $S + \epsilon = \{v^\prime_1,v^\prime_2,v^\prime_3 \}$ yields the matrix to the right where $v^\prime_i$ is the $\ith$ row of the  matrix. One can verify that $\fw(S + \epsilon) = \{v^\prime_2\}$ is also a singleton.  As $v^\prime_2\notin \mathcal{U}_M$, we get $\pi_M\!\left(v^\prime_2 \right) = (4.99,4.99,1,1.99,1.99,0,4,4) \neq v^\prime_2$, which shows $\pi_M\!\left( \fw(S + \epsilon)\right) \not\subset \fw(S+\epsilon)$.}
\end{example}

We now turn to computing the safety radius of the Fermat-Weber points\edit{, combined with a projection to $\uspace_M$}.
For the rest of the section, $S = \{ \sample \} \subset \edit{\TPT{q}}$. For noise vectors $\epsilon_1, \ldots, \epsilon_n \in \R^q$, set $S + \epsilon := \{ v_1 + \epsilon_1, \dots, v_n+\epsilon_n \}$.

\begin{definition}[\edit{Fermat-Weber} $M$-Ultrametric Safety Radius]\label{defn:FW_Multra_SR}
    \rot{Fix a matroid $M$ and a map $\pi_M : \TPT{q} \rightarrow \edit{\mathcal{U}_M}$. The Fermat-Weber $M$-ultrametric \rot{projection} method $S \mapsto \pi_M(FW(S))$ has a safety radius $s \in \R_{\geq 0}$} if, for any finite sample $S \subset \TPT{q}$, maximal cones $K$ of $\nested(M)$, and $\epsilon_1, \ldots, \epsilon_n \in \R^q$, the following holds: 
    \[
        \text{if } \fw(S) \cap \relint  K \neq \emptyset \text{ and } \sum_{j = 1}^n \|\epsilon_j\|_{\tr} < s \cdot \tildewmin \text{, then } \pi_M(\fw (S+\epsilon)) \cap \relint K \neq \emptyset,
    \]
    where $\tildewmin = \max_{w \in \pi_M(\fw(S)) \cap K} \wmin$.
\end{definition}

\edit{In what follows we will use $\tilde{w}$ to denote a vector in $\pi_M(\fw(S)\cap K)$ that achieves the maximum in the definition of $\tilde{w}_{\min}$.}

The Fermat-Weber points of $S$ are the minimizers (also known as an M-estimator\edit{s}) for the \emph{loss function} $f_S(x)$.
For any loss function $f(x)$, \rot{we can compute} the stability of a minimizer \rot{using} a lower bound on the loss function $f(x)$ in terms of the distance from $x$ to the set of minimizers \citep{hayashi2000extremum}. 
The following lemma gives such a bound for the tropical Fermat-Weber function.

\begin{lemma}
    \label{lem:FWfunction_reverse_Lipschitz}
    For any sample $S = \{ \sample \} \subset \TPT{q}$, we have the following bound
    \[
        \frac{1}{n} \dtr (x,\fw(S)) \leq f_S(x) - \min_{y \in \TPT{q}} f_S(y),
    \]
    where $\dtr (x,\fw(S)) = \min_{y \in \fw(S)} \dtr(x, y)$.
\end{lemma}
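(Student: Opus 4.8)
The plan is to read the desired inequality as a sharp linear growth (error bound) estimate for the Fermat--Weber function $f_S$, which is convex and piecewise linear on $\TPT{q}$. Writing $\mathcal{M} := \fw(S)$ for the (convex) set of minimizers and $m := \min_y f_S(y)$, I want $f_S(x) - m \ge \tfrac1n \dtr(x,\mathcal{M})$. The standard route to such a bound for a convex function is through its \emph{slope}: if there is a constant $\mu$ with $-f_S'(y;u) \ge \mu$ for some tropically unit direction $u$ (i.e. $\|u\|_{\tr}=1$) at every $y \notin \mathcal{M}$, then moving in a steepest--descent direction decreases $f_S$ at rate at least $\mu$ per unit of tropical length, so a descent path from $x$ to $\mathcal{M}$ has tropical length $L$ with $f_S(x) - m \ge \mu L \ge \mu\,\dtr(x,\mathcal{M})$. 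Since $f_S$ is piecewise linear, bounded below, and coercive on $\TPT{q}$ (it dominates $\dtr(\cdot,v_1)$ up to an additive constant), such a path is a finite concatenation of segments that genuinely terminates on $\mathcal{M}$. This reduces everything to showing that the slope of $f_S$ is at least $1/n$ at every non-minimizer.

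To compute the slope I would first record the subdifferential. Because $\dtr(x,v_i) = \max_k (x_k - v_{ik}) + \max_l (v_{il} - x_l)$, at a point $y$ we have $\partial f_S(y) = \tfrac1n \sum_{i=1}^n \big(\Delta(A_i) - \Delta(B_i)\big)$ as a Minkowski sum, where $A_i = \argmax_k (y_k - v_{ik})$, $B_i = \argmin_k (y_k - v_{ik})$, and $\Delta(A) = \conv\{e_k : k \in A\}$. All of these vectors are orthogonal to $\qq$, and on $\qq^{\perp}$ the dual of the tropical norm is $\tfrac12\|\cdot\|_1$; by Sion's minimax theorem the slope at $y$ then equals $\min_{g \in \partial f_S(y)} \tfrac12\|g\|_1$. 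So the task becomes a lower bound for $\|g\|_1$ over the subdifferential.

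The key computation is this: any $g \in \partial f_S(y)$ has the form $g = \tfrac1n(\mu - \nu)$ with $\mu = \sum_i \mu_i$, $\nu = \sum_i \nu_i$, $\mu_i \in \Delta(A_i)$, $\nu_i \in \Delta(B_i)$, both nonnegative of total mass $n$; hence $\|ng\|_1 = 2(n - O)$ where $O = \sum_k \min(\mu_k,\nu_k)$ is the overlap mass. Minimizing $\|ng\|_1$ therefore amounts to \emph{maximizing} the overlap, and I claim the maximal overlap $O^{\ast}$ is an integer. This is the crux, and I expect it to be the main obstacle: I would realize $O^{\ast}$ as the value of a bipartite maximum flow (left and right nodes $1,\dots,n$ with unit capacities, an edge from $i$ to $j$ whenever $A_i \cap B_j \neq \emptyset$), whose integrality follows from the integral max-flow / König theorem. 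A perfect overlap $O^{\ast} = n$ forces $\mu = \nu$, i.e. $0 \in \partial f_S(y)$, which happens exactly when $y \in \mathcal{M}$. Thus at any non-minimizer $O^{\ast} \le n-1$, so $\min_{g}\|ng\|_1 = 2(n-O^{\ast}) \ge 2$ and the slope is $\tfrac1{2n}\min_g\|ng\|_1 \ge \tfrac1n$.

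Combining the two halves, the slope of $f_S$ is at least $1/n$ off $\mathcal{M}$, and feeding this into the descent argument of the first paragraph yields $\tfrac1n\dtr(x,\mathcal{M}) \le f_S(x) - m$, which is exactly the assertion, since $\dtr(x,\fw(S)) = \dtr(x,\mathcal{M})$. The two delicate points I anticipate are the integrality of the maximal overlap — this is precisely what prevents the slope from dipping below $1/n$ at the non-smooth points where $\partial f_S(y)$ is a genuine Minkowski sum — and the verification that the piecewise-linear descent path actually reaches $\mathcal{M}$ rather than stalling, which coercivity together with finiteness of the linearity regions resolves.
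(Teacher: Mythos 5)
Your proof is sound, but it takes a genuinely different, dual route from the paper's primal one. The paper fixes the Fermat--Weber point $y$ nearest to $x$, decomposes the tropical segment from $y$ to $x$ into classical segments parallel to zero-one vectors $u_i = \qq_{U_i}$, and shows each directional derivative $\partial_{u_i} f_S(y)$ is an integer multiple of $1/n$ that is moreover nonzero --- a zero derivative would produce a Fermat--Weber point strictly closer to $x$, since each $U_i$ contains $\argmax_a(x_a - y_a)$ and avoids $\argmin_a(x_a - y_a)$; a local linearity claim then combines these into $\partial_{(x-y)/D} f_S(y) \geq 1/n$, and convexity along that single direction finishes. You instead bound the slope at \emph{every} non-minimizer: your subdifferential formula as a Minkowski sum of simplex differences is right, the dual norm of $\dtr$ on $\qq^{\perp}$ is indeed $\tfrac{1}{2}\|\cdot\|_1$, and your overlap bookkeeping is correct ($\|\mu - \nu\|_1 = 2n - 2O$, so minimizing the subgradient norm is maximizing overlap); the integrality of the maximal overlap is also valid --- the four-layer flow network with unit capacities at sources and sinks realizes the fractional overlap LP exactly, so bipartite max-flow/matching integrality applies --- and this replaces the paper's ``closer Fermat--Weber point'' contradiction, since $O^{*} = n$ iff $0 \in \partial f_S(y)$ iff $y \in \fw(S)$, whence $O^{*} \leq n-1$ and the slope is at least $1/n$ off $\fw(S)$. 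What your route buys: it isolates the mechanism behind the constant $1/n$ (LP integrality quantizes the minimal subgradient norm) and proves the stronger fact that the slope is bounded below everywhere off the minimizing set, not merely along the direction toward one nearest minimizer. What the paper's route buys: it is elementary and self-contained, using no subdifferential calculus, minimax, or error-bound machinery, and it sidesteps your one soft step --- passing from the slope bound to the error bound via a steepest-descent path, where termination of piecewise-linear steepest descent needs more care than ``coercivity plus finitely many linearity regions'' (step lengths can shrink and the path may only reach $\fw(S)$ in the limit). That implication is nonetheless a true, standard principle for convex functions, and the cleanest patch is Ekeland's variational principle: if $f_S(x) - \min f_S < \tfrac{1}{n}\dtr(x,\fw(S))$, Ekeland produces a point $y \notin \fw(S)$ admitting a subgradient of dual norm strictly less than $1/n$, contradicting your slope bound. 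With that substitution, your argument is complete.
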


Before proving \Cref{lem:FWfunction_reverse_Lipschitz}, we present a lemma which we use frequently.
\begin{lemma}\label{lem:perturbed_extrema}
    For $U \subset [q]$, $x \in \R^q \setminus \R \qq$ and all $0 < t < \max^1 x - \max^2 x$:
    \[
        \max_e (x + t\qq_U)_e = \begin{cases}
            t+\max_e x_e &\text{if }U\cap\argmax_e x_e \neq \emptyset, \\
            \max_e x_e &\text{otherwise}. \\
        \end{cases}
    \]
    Similarly, for all $0 < t < \min^2 x - \min^1 x$:
    \[
        \min_e (x + t\qq_U)_e = \begin{cases}
            t+\min_e x_e &\text{if }\argmin_e x_e \subset U, \\
            \min_e x_e &\text{otherwise}. \\
        \end{cases}
    \]
\end{lemma}
\begin{proof}
    We prove the result for $\max_e(x+t\qq_U)_e$; the argument for $\min_e(x+t\qq_U)_e$ is similar.
    Suppose $a \in U \cap \argmax_e x_e$; then 
    \[
        (x+t \qq_U)_a = t+x_a = t + \max_e x_e \geq \max_e( x + t \qq_U)_e.
    \]
    Hence $\max_e(x+t\qq_U)_e = (x+t\qq_U)_a = t+\max_e x_e$.
    Now suppose that $U \cap \argmax_e x_e = \emptyset$. Then for all $a \in \argmax_e x_e$, $(x+ t \qq_U)_a = x_a = \max_e x_e$. For $b \notin \argmax_e x_e$:
    \[
        (x+ t \qq_U)_b \leq t + x_b \leq t + x_a - (\textstyle\max_e^1 x_e - \textstyle\max_e^2 x_e) \leq x_a = (x+ t \qq_U)_a.
    \]
    Hence $\max_e (x+ t \qq_U)_e = (x+ t \qq_U)_a = \max_e x_e$.
\end{proof}
We can now prove \Cref{lem:FWfunction_reverse_Lipschitz}.

\begin{proof}
    To simplify the notation, we set $f(x) = f_S(x)$ throughout the proof.
    Fix $x \in \TPT{q}$, and let $y$ be any point in $\fw(S)$ which minimizes the tropical distance to $x$. The inequality in the lemma holds trivially for all $x \in \fw(S)$, so let us assume $x \notin \fw(S)$, and therefore~$x \neq y$.
    
    Let $\edit{d :}= \dtr(x,y) = \dtr(x,\fw(S)) >0$. 
    Consider the tropical unit vector $(x-y)/\edit{d} \in \TPT{q}$. 
    The derivative of $f(y)$ in the $(x-y)/\edit{d}$ direction is
    \[
        \partial_{(x-y)/\edit{d}}f(y) \coloneqq \lim_{t \rightarrow 0^+} \frac{f(y+\frac{t}{\edit{d}}(x-y)) - f(y)}{t}.
    \]
    We will show that this directional derivative is bounded below by $1/n$:
    \begin{align}\label{eq:direc_deriv_bounded}
        1/n \leq \partial_{(x-y)/\edit{d}} f(y).
    \end{align}
    As $f$ is convex, $t^{-1}(f(y+ \frac{t}{\edit{d}}(x-y))- f(y))$ does not increase as $t \to 0^+$. Thus, \Cref{eq:direc_deriv_bounded} implies that there is some sufficiently small $t$ such that $1/n \leq t^{-1}(f(y+ \frac{t}{\edit{d}}(x-y))- f(y))$.
    For such~$t$, we have
    \begin{align*}
        \frac{1}{n} &\leq \frac{f(y + \frac{t}{\edit{d}} (x - y))-f(y)}{t} = \frac{f((1 - \frac{t}{\edit{d}}) y + \frac{t}{\edit{d}} x) - f(y)}{t} \\
        &\leq \frac{(1 - \frac{t}{\edit{d}})f(y) + \frac{t}{\edit{d}}f(x) - f(y)}{t} = \frac{1}{\edit{d}} (f(x) - f(y)).
    \end{align*}
    The second inequality holds because $f$ is convex.
    Clearing $\edit{d}$ from the denominators gives the inequality in the statement of the lemma.
    It therefore suffices to prove \edit{the inequality in }\Cref{eq:direc_deriv_bounded}.

    To bound the derivative of $f(y)$ in the $(x-y)/\edit{d}$ direction, we first consider directional derivatives in the direction of certain zero-one vectors. 
    The tropical line segment from $y$ to $x$ is a concatenation of classical line segments \citep{develin2004tropicalconvexity}. 
    The number $k$ of classical line segments in the tropical line segment from $y$ to $x$ is one less than the number of distinct entries in $x - y$.  Let $\argmax^j_a (x_a - y_a) \subseteq [q]$ be the coordinates of $x_a - y_a$ achieving the $j$th maximum.  Then,
    the $\ith$ line segment increases exactly the entries indexed by $U_i$, where $U_i := \bigcup_{1 \leq j \leq i} \argmax^j_a (x_a - y_a) \subseteq [q]$. 
    \rot{We note that 
    \begin{gather*}
        \emptyset \subset U_1 \subset \dots \subset U_k \subset [q], \\
        \forall i: \;\textstyle\argmin_a(x_a - y_a) \cap U_i = \emptyset, \\
        \forall i: \;\rot{\textstyle\argmax_a(x_a - y_a) \subset U_i}.
    \end{gather*}
    }
    The $\ith$ line segment is parallel to $u_i := \qq_{U_i}$.
    
    We will show that the directional derivatives $\partial_{u_i} f(y)$ are at least $1/n$.
    
    \begin{claim}
        For all $u_i$, the directional derivative of $f$ at $y$ in the direction of $u_i$ satisfies
        $$\frac{1}{n} \leq \partial_{u_i} f(y) \coloneqq \lim_{t \rightarrow 0^+} \frac{f(y+t u_i) - f(y)}{t}.$$
    \end{claim}
    
    \begin{proof}[Proof of Claim 1]
        As $y$ minimizes $f$, we know $\partial_{u_i} f(y) \geq 0$. 
        We will show $\partial_{u_i}f(y)$ is a positive integer multiple of $1/n$.
        
        First we prove that $\partial_{u_i}f(y)$ is non-zero. 
        Suppose for a contradiction that $\partial_{u_i}f(y)$ is zero. 
        Since $f$ is a piecewise linear function with finitely many pieces, for sufficiently small $t$, we have $f(y+t u_i) = f(y)$. 
        Therefore, $y + t u_i$ is a Fermat-Weber point. 
        Moreover, \rot{by \Cref{lem:perturbed_extrema} and the definition of each $U_i$ set, for sufficiently small $t$ we have:}
        \begin{align*}
            \dtr(x, y+t u_i) &= \max_{a} ( x_a - y_a - t u_{ia} ) - \min_{a} ( x_a - y_a - t u_{ia} ) \\
                &= \max_{a} ( x_a - y_a - t ) - \min_{a} ( x_a - y_a  - 0) \\
                &< \max_{a} ( x_a - y_a ) - \min_{a} ( x_a - y_a ) \\
                &= \dtr(x,y).
        \end{align*}
        This contradicts the assumption that $y$ is a closest Fermat-Weber point to $x$, so we conclude that $\partial_{u_i}f(y) \neq 0$.

        Now writing out $\partial_{u_i}f(y)$ explicitly:
        \edit{\begin{align*}
            \partial_{u_i}f(y) &= \lim_{t \rightarrow 0^+} \frac{1}{nt} \sum_{j=1}^n \dtr(y+tu_i,v_j) - \dtr(y,v_j) \\
                &= \frac{1}{n} \sum_{j=1}^n \lim_{t \rightarrow 0^+} t^{-1} \\
                &\times \left[\max_a (y_a + t u_{ia} - v_{ja}) - \min_a (y_a + t u_{ia} - v_{ja}) - \max_a (y_a - v_{ja}) + \min_a (y_a - v_{ja})\right].
        \end{align*}}
        If \rot{$y \nsim v_j$}, then let $t>0$ be less than the smaller of $\max^1_a (y_a - v_a) - \max^2_a (y_a - v_a)$ and $\min^2_a (y_a - v_a) - \min^1_a (y_a - v_a)$.
        If \rot{$y \sim v_j$}, then any $t$ is sufficiently small. Then \rot{using \Cref{lem:perturbed_extrema}} for each $i$, we can expand $\max_a (y_a + t u_{ia} - v_{ja})$:
        \begin{align*}
            \max_a (y_a + t u_{ia} - v_{ja}) &= \begin{cases}
                \max_a(y_a-v_{ja}) + t & U_i \cap \argmax_a(y_a - v_{ja}) \neq \emptyset, \\
                \max_a(y_a-v_{ja}) & U_i \cap \argmax_a(y_a - v_{ja}) = \emptyset. 
            \end{cases}
        \end{align*}
        Similarly:
        \begin{align*}
            \min_a (y_a + t u_{ia} - v_{ja}) &= \begin{cases}
                \min_a(y_a-v_{ja}) + t & \argmin_a (y_a - v_{ja}) \subset U_i, \\
                \min_a(y_a-v_{ja}) & \argmin_a (y_a-v_{ja}) \not \subset U_i.
            \end{cases}
        \end{align*}
        Hence, the \edit{summand} of the directional derivative \edit{is} 
        \edit{\begin{align*}
            \lim_{t \rightarrow 0^+} t^{-1} & ( \dtr(y+tu_i,v_j) - \dtr(y,v_j) ) \\
            &= \lim_{t \rightarrow 0^+} t^{-1} \\
            &\times \left[\max_a (y_a + t u_{ia} - v_{ja}) - \min_a (y_a + t u_{ia} - v_{ja}) - \max_a (y_a - v_{ja}) + \min_a (y_a - v_{ja})\right] \\
            &= \begin{cases}
                1 & U_i \cap \argmax_{a} (y_a-v_{ja}) \neq \emptyset, \argmin_a (y_a-v_{ja}) \not\subset U_i, \\
                -1 & U_i \cap \argmax_{a} (y_a-v_{ja}) = \emptyset, \argmin_a (y_a - v_{ja}) \subset U_i, \\
                0 & \text{otherwise.}
            \end{cases}
        \end{align*}}
        These cases correspond to the cases where the maximum $\max_a (y_a + t u_{ia} - v_{ja})$ is increasing in $t$ but the minimum $\min_a (y_a + t u_{ia} - v_{ja})$ is not, the minimum $\min_a (y_a + t u_{ia} - v_{ja})$ is increasing in $t$ but the maximum $\max_a (y_a + t u_{ia} - v_{ja})$ is not, and the finally the case where they are either both increasing or both fixed in $t$. 
        Hence the directional derivative $\partial_{u_i}f(y)$ is an integer multiple of $1/n$. We conclude that $\partial_{u_i}f(y) \geq 1/n$.
    \end{proof}

    Next, we use the bounds on the directional derivatives $\partial_{u_i} f(y)$ to obtain the desired bound on $\partial_{(x-y)/\edit{d}} f(y)$.
    We can write $x - y$ as a classical linear combination of $u_1, \ldots, u_k$:
    \[x - y = \R \qq \odot \sum_{i=1}^k \gamma_i u_i, \]
    where $\gamma_i = \max^i_a (x_a - y_a) - \max^{i+1}_a (x_a - y_a)$.
    Moreover, $\gamma_i > 0$ for all $i$, and $\sum_{i=1}^k \gamma_i = \dtr(x, y) = \edit{d}$.
    This allows us to write $(x-y)/\edit{d}$ as a convex combination of the $u_i$:
    \begin{equation}
        \label{eqn: convex combo}
        \frac{x - y}{\edit{d}} = \R \qq \odot \sum_{i=1}^k \lambda_i u_i, 
    \end{equation}
    where $\lambda_i = \gamma_i/\edit{d}$ for $i = 1, \ldots, k$; note that $\lambda_i > 0$, and $\sum_{i=1}^k \lambda_i = 1$.
    
    \begin{claim}
        Let $\lambda_1, \ldots, \lambda_k \geq 0$, and $\sum_{i=1}^k \lambda_i = 1$.
        For sufficiently small $t$, $f(y + t \sum_i \lambda_i u_i) = \sum_{i} \lambda_i f(y + t u_i)$.
    \end{claim}
    \begin{proof}[Proof of Claim 2]
        It suffices to consider a single term of the Fermat--Weber function, so we consider the distance to a single data point $v \in S$. 

        We must pick $t$ small enough so that $\argmax_a (y_a-v_a + t\textstyle\sum_i\lambda_i u_{ia}) \subseteq \argmax_a (y_a - v_a)$.
        We note that if $y = v$, then this holds for any $t$. Otherwise, as before, we require that $t < \max^1_a (y_a - v_{a}) - \max^2_a (y_a - v_{a}), \min^2_a (y_a - v_{a}) - \min^1_a (y_a - v_{a})$.
        The distance to $v$ is
        \begin{align*}
            \dtr(y+t\textstyle\sum_i\lambda_i u_i, v) = \displaystyle\max_a (y_a-v_a + t\textstyle\sum_i\lambda_i u_{ia}) - \displaystyle\min_a (y_a-v_a + t\textstyle\sum_i\lambda_i u_{ia}).
        \end{align*}
        We picked $t$ small enough that any maximal coordinate for $y_a-v_a + t\textstyle\sum_i\lambda_i u_{ia}$ must have also been maximal for $y_a-v_a$. Similarly, any minimal coordinate for $y_a-v_a + t\textstyle\sum_i\lambda_i u_{ia}$ must also be minimal for $y_a-v_a$. This allows us to explicitly expand the maximum and minimum terms.
        
        We first consider the maximum term in this distance. Consider $\argmax_a(y_a-v_a) \cap U_i$ for $i = 1, \ldots, k$. If $\argmax_a(y_a-v_a) \cap U_i = \emptyset$ for all $i$, then by our choice of $t$, a perturbation by $t\textstyle\sum_i\lambda_i u_{ia}$ leaves the maximum unchanged.
        Furthermore, since $\lambda_1 + \cdots + \lambda_k = 1$, we have
        \begin{align*}
            \max_a (y_a-v_a + t\textstyle\sum_i\lambda_i u_{ia}) = \displaystyle\max_a (y_a-v_a)
                    &= \textstyle\sum_i \lambda_i \displaystyle\max_a (y_a-v_a + tu_{ia}).
        \end{align*}
        Otherwise, let $r$ be the smallest index such that $\argmax_a(y_a-v_a) \cap U_r \neq \emptyset$. Then,
        \begin{align*}
            \max_a (y_a-v_a + t\textstyle\sum_i\lambda_i u_{ia}) &= \max_a (y_a-v_a) + \sum_{i = r}^k \lambda_i t \\
                    &= \sum_{i=1}^{r-1} \lambda_i \max_a (y_a-v_a) + \sum_{i = r}^k \lambda_i (\max_a (y_a-v_a) + t) \\
                    &= \sum_{i=1}^k \lambda_i \max_a (y_a-v_a + tu_{ia}).
        \end{align*}
        We perform an analogous computation for the minimum term. 
        Consider $\argmin_a (y_a-v_a) \setminus U_i$ \edit{for} $i = 1, \ldots, k$. 
        If $\argmin_a (y_a-v_a) \setminus U_i$ is non-empty for all $i$, then the minimum remains unchanged and we can write:
        \begin{align*}
            \min_a (y_a-v_a + t\textstyle\sum_i\lambda_i u_{ia}) = \displaystyle\min_a (y_a-v_a)
                    = \sum_i \lambda_{i=1}^k \displaystyle\min_a (y_a-v_a+tu_{ia}).
        \end{align*}
        Otherwise, let $s$ be the smallest $i$ such that $\argmin_a (y_a-v_a) \setminus U_i = \emptyset$. Then,
        \begin{align*}
            \min_a (y_a-v_a + t\textstyle\sum_{i=1}^k \lambda_i u_{ia}) &= \min_a (y_a-v_a) + \sum_{i = s}^k \lambda_i t \\
                    &= \sum_{i=1}^{s-1} \lambda_i \min_a (y_a-v_a) + \sum_{i = s}^k \lambda_i (\min_a (y_a-v_a) + t) \\
                    &= \sum_{i=1}^k \lambda_i \min_a (y_a-v_a + tu_{ia}).
        \end{align*}
        So the claim holds for a single datapoint $v$, and by summing over our sample we get the desired result.
    \end{proof}
    
    We can now compute the directional derivative $\partial_{(x-y)/\edit{d}}f(y)$.
    Recall \Cref{eqn: convex combo}, which gives $(x - y)/\edit{d} \odot \R \qq$ as a convex combination of the $u_i$.
    We have
    \begin{align*}
        \partial_{(x-y)/\edit{d}}f(y)
            &= \lim_{t \rightarrow 0^+} \frac{f(y+t\sum_{i=1}^k \lambda_i u_i)- f(y)}{t} \\
            &= \lim_{t \rightarrow 0^+} \frac{ \big(\sum_{i=1}^k \lambda_i f(y+t u_i)\big) - (\sum_{i=1}^k \lambda_i) \cdot f(y)}{t} \\
            &= \sum_{i=1}^k \lambda_i \lim_{t \rightarrow 0^+} \frac{ f(y+t u_i)- f(y)}{t} = \sum_{i=1}^k \lambda_i \partial_{u_i} f(y).
    \end{align*}

    Since $\partial_{u_i} f(y) \geq 1/n$ for all $i$, this implies $\partial_{(x-y)/\edit{d}}f(y) \geq 1/n$.
\end{proof}

\begin{lemma}
    \label{lem: hausdorff}
    Let $S = \{ \sample \}$ be a sample in $\TPT{q}$, and let $S+\epsilon = \{ v_1+\epsilon_1, \dots, v_n+\epsilon_n \}$ for some $\epsilon_1, \ldots, \epsilon_n \in \R^q$. Then,
    \[
        d_{H}(\fw(S), \fw(S+\epsilon)) \leq 2 \cdot \sum_{j = 1}^n \|\epsilon_j\|_{\tr},
    \]
    where $d_H$ is the Hausdorff distance using $\dtr$, which is given by:
    \[
        d_H(A,B) = \max \{ \sup_{x \in A} \dtr(x,B), \sup_{y \in B} \dtr(A, y) \}.
     \]   
\end{lemma}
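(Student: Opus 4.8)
The plan is to treat this as a standard stability estimate for $M$-estimators, using the reverse Lipschitz bound of \Cref{lem:FWfunction_reverse_Lipschitz} as the engine that converts a bound on \emph{excess loss} into a bound on distance to the minimizer set. Throughout I write $E := \sum_{j=1}^n \|\epsilon_j\|_{\tr}$ for brevity, so that the claimed bound reads $d_H(\fw(S), \fw(S+\epsilon)) \le 2E$.

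First I would establish that the two Fermat--Weber functions $f_S$ and $f_{S+\epsilon}$ are \emph{uniformly} close. Since $\dtr$ is translation invariant on $\TPT{q}$, we have $\dtr(v_j, v_j + \epsilon_j) = \|\epsilon_j\|_{\tr}$, so the triangle inequality gives $|\dtr(x, v_j+\epsilon_j) - \dtr(x, v_j)| \le \|\epsilon_j\|_{\tr}$ for every $x \in \TPT{q}$. Averaging over $j = 1, \ldots, n$ yields
\[
    \sup_{x \in \TPT{q}} \left| f_{S+\epsilon}(x) - f_S(x) \right| \le \frac{E}{n}.
\]

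Next I would bound the excess loss of a perturbed minimizer measured against the unperturbed function. Fix any $x^* \in \fw(S+\epsilon)$ and any $y^* \in \fw(S)$. Chaining the uniform bound with the optimality of $x^*$ for $f_{S+\epsilon}$ and of $y^*$ for $f_S$ gives
\[
    f_S(x^*) \le f_{S+\epsilon}(x^*) + \tfrac{E}{n} \le f_{S+\epsilon}(y^*) + \tfrac{E}{n} \le f_S(y^*) + \tfrac{2E}{n},
\]
so that $f_S(x^*) - \min_{y \in \TPT{q}} f_S(y) \le 2E/n$. Applying \Cref{lem:FWfunction_reverse_Lipschitz} to $f_S$ then yields $\tfrac{1}{n}\dtr(x^*, \fw(S)) \le 2E/n$, i.e.\ $\dtr(x^*, \fw(S)) \le 2E$. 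Taking the supremum over $x^* \in \fw(S+\epsilon)$ controls one of the two terms in the Hausdorff distance.

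Finally, the argument is symmetric: since $S = (S+\epsilon) + (-\epsilon)$ and $\|-\epsilon_j\|_{\tr} = \|\epsilon_j\|_{\tr}$, repeating the computation with the roles of $S$ and $S+\epsilon$ exchanged bounds $\sup_{y^* \in \fw(S)} \dtr(y^*, \fw(S+\epsilon)) \le 2E$. Combining the two directions gives $d_H(\fw(S), \fw(S+\epsilon)) \le 2E$, as claimed. I do not anticipate a genuine obstacle here; the only point requiring care is bookkeeping of the normalizing factors, namely that the $1/n$ built into the reverse Lipschitz bound of \Cref{lem:FWfunction_reverse_Lipschitz} must cancel the $1/n$ coming from the uniform closeness estimate in order to produce the clean factor of $2$ in the final bound.
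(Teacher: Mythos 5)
Your proof is correct and follows essentially the same route as the paper's: both convert the excess loss $f_S(x^*) - \min f_S \le \tfrac{2}{n}\sum_j \|\epsilon_j\|_{\tr}$ into a distance bound via \Cref{lem:FWfunction_reverse_Lipschitz}, then symmetrize. The only cosmetic difference is that you package the perturbation estimate as a uniform bound $\sup_x |f_{S+\epsilon}(x) - f_S(x)| \le \tfrac{1}{n}\sum_j \|\epsilon_j\|_{\tr}$ before chaining, whereas the paper applies the triangle inequality termwise inside the sum $f_S(y) - f_S(z) + f_{S+\epsilon}(z) - f_{S+\epsilon}(y)$; these are the same estimate.
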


\begin{proof}
    We denote the Fermat-Weber functionals for the sample $S$ and $S+\epsilon$ by $f_{S}$ and $f_{S+\epsilon}$ respectively. 
    Let $y$ be any Fermat-Weber point of $S+\epsilon$. 
    By \Cref{lem:FWfunction_reverse_Lipschitz}, for all $z$ in $\fw(S)$:
    \begin{align*}
        \frac{1}{n} \dtr(\fw(S), y) &\leq f_S(y) - f_S(z) \\
                &\leq f_S(y) - f_S(z) + f_{S+\epsilon}(z) - f_{S+\epsilon}(y) \\
                &= \frac{1}{n} \sum_{j=1}^n \dtr(y,v_j) - \dtr(z,v_j) + \dtr(z,v_j+\epsilon_j) - \dtr(y,v_j+\epsilon_j).
    \end{align*}
    The triangle inequality implies $\dtr(y, v_j) - \dtr(y, v_j + \epsilon_j) \leq \dtr(v_j, v_j + \epsilon_j)$, and also $ \dtr(z, v_j + \epsilon_j) - \dtr(z, v_j) \leq \dtr(v_j, v_j + \epsilon_j)$.
    Thus,
    
    \begin{align*}
        \frac{1}{n} \dtr(\fw(S), y)
                &\leq \frac1n \sum_{j=1}^n 2 \, \dtr(v_j, v_j + \epsilon_j) \\
                &= \frac{1}{n} \sum_{j=1}^n 2 \, \|\epsilon_j \|_{\tr}.
    \end{align*}

    Hence:
    \begin{align*}
        \sup_{y \in \fw(S+\epsilon)} \dtr(\fw(S), y) \leq 2 \sum_{j=1}^n \|\epsilon_j \|_{\tr}.
    \end{align*}
     A symmetrical argument proves that $\sup_{z \in \fw(S)} \dtr(z, \fw(S + \epsilon)) \leq 2 \sum_{j} \|\epsilon_j \|_{\tr}$ as well.
\end{proof}

This bound on $d_{H}(\fw(S), \fw(S+\epsilon))$ is not necessarily optimal for all $n$. 
For example, when $n=1$ the factor of two is superfluous. 
For a view of the empirical tightness of this bound, see \Cref{fig:haust_dist_histograms}.

Having identified concrete bounds on the stability of the tropical Fermat--Weber points, we can now state our main theorem.

\begin{theorem}
    \label{thm: tropical safety radius}
    For any matroid $M$, the Fermat-Weber $M$-ultrametric safety radius \edit{is} 1/2.
\end{theorem}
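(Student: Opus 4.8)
The plan is to show that $s = 1/2$ satisfies the defining implication of the Fermat--Weber $M$-ultrametric safety radius, by chaining together the four results already established: \Cref{thm: symmetric tropical median}, \Cref{lem: hausdorff}, \Cref{lem: non-expansive}, and \Cref{lemma: dist to cone boundary}. The guiding observation is that the factor of $2$ in the Hausdorff bound of \Cref{lem: hausdorff} exactly cancels the factor $1/2$ in the radius: a perturbation with $\sum_j \|\epsilon_j\|_{\tr} < \tfrac12 \tildewmin$ moves the Fermat--Weber set by less than $\tildewmin$ in Hausdorff distance, and by \Cref{lemma: dist to cone boundary} the quantity $\tildewmin$ is precisely the distance one must travel from the relevant interior point to leave $\relint K$.

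Concretely, assume $\fw(S) \cap \relint K \neq \emptyset$ and $\sum_j \|\epsilon_j\|_{\tr} < \tfrac12 \tildewmin$. First I would invoke \Cref{thm: symmetric tropical median}, which gives $\pi_M(\fw(S)) = \fw(S) \cap \berg(M)$, to select a point $w^* \in \pi_M(\fw(S)) \cap \relint K$ realizing the maximum in the definition of $\tildewmin$, so that $\wmin(w^*) = \tildewmin$; note $\pi_M(w^*) = w^*$ because $w^* \in \berg(M)$. Since $w^* \in \fw(S)$, \Cref{lem: hausdorff} produces a Fermat--Weber point $y \in \fw(S+\epsilon)$ with
\[ \dtr(w^*, y) \;\leq\; d_H\big(\fw(S), \fw(S+\epsilon)\big) \;\leq\; 2 \sum_{j=1}^n \|\epsilon_j\|_{\tr} \;<\; \tildewmin. \]
Projecting and applying non-expansivity (\Cref{lem: non-expansive}) together with $\pi_M(w^*) = w^*$ yields
\[ \dtr\big(w^*, \pi_M(y)\big) \;=\; \dtr\big(\pi_M(w^*), \pi_M(y)\big) \;\leq\; \dtr(w^*, y) \;<\; \tildewmin \;=\; \wmin(w^*). \]
Finally, \Cref{lemma: dist to cone boundary} guarantees that every point within tropical distance $\wmin(w^*)$ of $w^* \in \relint K$ again lies in $\relint K$, so $\pi_M(y) \in \relint K$. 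As $\pi_M(y) \in \pi_M(\fw(S+\epsilon))$, this gives $\pi_M(\fw(S+\epsilon)) \cap \relint K \neq \emptyset$, which is exactly the required conclusion for $s = 1/2$.

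The step I expect to be the main obstacle is the very first one: justifying that the maximizer defining $\tildewmin$ can be taken in the \emph{relative interior} of $K$, so that \Cref{lemma: dist to cone boundary} applies with the distance $\wmin(w^*)$ equal to the full $\tildewmin$. This is delicate because $\wmin$ decays to zero as one approaches $\partial K$ from within $\relint K$ (some circuit gap closes), yet can jump to a positive value on lower-dimensional faces where that circuit is dropped from the minimum; one must argue from the concavity of $w \mapsto \wmin(w)$ on $\relint K$ (each $\max^1_{e\in C} w_e - \max^2_{e \in C} w_e$ is a minimum of linear functionals on a fixed cone) that the supremum over $\pi_M(\fw(S)) \cap \relint K$ is attained at an interior point. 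If one additionally wishes to show $1/2$ is optimal, and not merely admissible, the remaining task is to exhibit a family of samples saturating both the factor-$2$ estimate of \Cref{lem: hausdorff} and the boundary bound of \Cref{lemma: dist to cone boundary} simultaneously; the graphic matroid of $K_3$ (equidistant trees on three leaves), whose cones are one-dimensional and whose geometry is fully explicit, is a natural place to build such an example, the difficulty being to align the maximal Fermat--Weber displacement with the direction transverse to the facet of $K$ through $\pi_M(\fw(S))$ while keeping $\sum_j \|\epsilon_j\|_{\tr}$ below $s\,\tildewmin$ for every $s > 1/2$.
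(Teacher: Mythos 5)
Your proof is correct and follows essentially the same route as the paper's: choose a Fermat--Weber point realizing $\tildewmin$ (legitimate by \Cref{thm: symmetric tropical median}), bound the displacement of the Fermat--Weber set by $2\sum_{j}\|\epsilon_j\|_{\tr} < \tildewmin$ via \Cref{lem: hausdorff}, and conclude membership in $\relint K$ via \Cref{lemma: dist to cone boundary}. If anything, your version is more careful than the paper's two-line argument, which asserts that the nearby Fermat--Weber point $u$ of $S+\epsilon$ lies in $\relint K$ even though $u$ need not lie in $\berg(M)$ --- your insertion of $\pi_M$ together with \Cref{lem: non-expansive} supplies exactly that missing step --- and the subtlety you flag about the maximizer of $\wmin$ lying in $\relint K$ (where $\wmin$ can jump upward on $\partial K$ as circuits drop out of the minimum) corresponds to the paper's implicit reading of $\tildewmin$ as the maximal distance $\dtr(w,\partial K)$, which your concavity argument justifies.
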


\begin{proof}
    Let $S = \{ \sample \}$ be a sample which has a tropical Fermat-Weber point in the relative interior of the cone $K \in \edit{\nested(M)}$, and let $w$ be the Fermat-Weber point in $K$ that is furthest from the boundary of $K$, such that $\tildewmin = \dtr(w,\partial K)$. Suppose the sum of tropical norms of $\epsilon_1, \dots, \epsilon_n$ is at most $\tildewmin/2$. By \Cref{lem: hausdorff}:
    \[
        d_H(\fw(S), \fw(S+\epsilon)) \leq \tildewmin.
    \]
    Therefore $S+\epsilon$ has some Fermat-Weber point $u$ within $\tildewmin$ of $w$, which ensures that \edit{$\pi_M(u)$} is also in $\relint K$.
\end{proof}

\edit{
\section{Stochastic Safety Radius and Experimental Results} 
\label{sec:stochastic safety radius}
}

\edit{
The safety radius, as introduced by Atteson in \citep{atteson1999performance}, provides conditions for when a distance-based method returns a correct topology after introducing Gaussian noise to a tree metric. This definition of a safety radius is {\em deterministic} even though
the input data are often multivariate random variables. 
This motivated Gascuel and Steel \citep{gascuel2016stochastic} to define a ``stochastic safety radius'' in terms of a probability distribution: assuming the probability of returning the correct tree topology is at least $1-\eta$, how much variation can our error have?}

\edit{
In this section, we extend the latter definition to Fermat--Weber $M$-ultrametric reconstructions for general \edit{matroids}, showing that for any matroid and desired probability of success $1-\eta$, there is some non-zero stochastic safety radius. In fact, for sufficiently large $n,q$, $\fw$ $M$-ultrametric stochastic safety radii can be taken arbitrarily close to $1/8$.
}

\begin{definition}[FW $M$-Ultrametric Stochastic Safety Radius]
    \label{defn: stochastic safety radius}
    Fix \edit{a sample size $n$, a ground set $[q]$, and} a matroid $M$ \edit{on $[q]$}.
    \edit{Some} $s \in \R_{\geq 0}$ is a $\eta$-stochastic safety radius of the Fermat-Weber $M$-ultrametric method if, for any finite sample $S \subset \TPT{q}$, maximal cones $K$ of $\nested(M)$, and i.i.d. random variables $\epsilon_1, \ldots, \epsilon_n \sim N(0,\sigma^2\mathbb{I}_q
    )$, the following holds: 
    \[
        \fw(S) \cap \relint  K \neq \emptyset \text{ and } \sigma < s \cdot \frac{\sqrt{2} \tildewmin}{n\sqrt{\log q}} \, \Rightarrow \,  \mathbb P(\pi_M(\fw (S+\epsilon)) \cap \relint  K \neq \emptyset ) \geq 1 - \eta,
    \]
    where $\tildewmin = \max_{w \in \pi_M(\fw(S)) \cap K} \wmin$.
\end{definition}

We note that in the standard phylogenetic stochastic safety radius, the coefficient of $s$ depends on the number of leaves, $p$, instead of the dimension of the ambient space, $q = \binom{p}{2}$. 
However, for large leaf count $p$, $\sqrt{\log (q)} \approx \sqrt{2\log(p)}$. 
This motivates the factor of $\sqrt{2}$ in our definition.
Our definition is designed to standardize the order of magnitude of $s$ for large $n,q$ and to allow for a meaningful comparison between methods at different scales. This is reflected in the \edit{next} theorem.

\begin{theorem}
    \label{thm: stochastic safety radius}
    For any $\eta > 0$, $n,q \in \mathbb{N}$, and matroid $M$ \edit{on $[q]$}, there is a non-zero \rot{$\fw$ $M$-ultrametric stochastic} safety radius $s > 0$. Moreover, \rot{the stochastic safety radius $s$ can be taken arbitrarily close to $1/8$ for sufficiently large $n,q$}.
\end{theorem}
\begin{proof}
    We will use the notation from \Cref{defn: stochastic safety radius}.
    In particular, $\epsilon_1, \ldots, \epsilon_n \sim N(0, \sigma^2 \mathbb{I}_q)$.
    By \cref{thm: tropical safety radius}, we have $\pi_M(\fw (S+\epsilon)) \cap \relint K$ \edit{is non-empty} whenever $\fw(S)\cap \relint  K\neq \emptyset$ and $\sum_{j=1}^n \|\epsilon_j \|_{\tr} \leq \tildewmin/2$. Let $Z_1, \ldots, Z_n \sim N(0,\edit{\mathbb{I}}_q)$ be standard i.i.d. multivariate normal variables, so that $\epsilon_j \sim \sigma Z_j$, for $j = 1, \ldots, n$. Define $E(q) = \mathbb E \|Z_j\|_{\tr}$ and $V(q) = \text{Var} \|Z_j\|_{\tr}$. Using Chebyshev's inequality:
    \begin{align*}
        \mathbb P \left(\pi_M(\fw (S+\epsilon)) \cap \relint  K \neq \emptyset\right) &\geq \mathbb P\left( \sum_{j=1}^n \|\epsilon_j \|_{\tr} \leq \frac{\tildewmin}{2} \right) \\
            &= \mathbb P \left( \frac{1}{n}\sum_{j=1}^n \|Z_j \|_{\tr} \leq \frac{\tildewmin}{\edit{2n\sigma}} \right) \\
            &= 1 - \mathbb P \left( \frac{1}{n}\sum_{j=1}^n \|Z_j \|_{\tr} - E(q) > \frac{\tildewmin}{\edit{2n\sigma}} - E(q) \right) \\
            &\geq 1 - \mathbb P \left( \left| \frac{1}{n}\sum_{j=1}^n \|Z_j \|_{\tr} - E(q) \right| > \frac{\tildewmin}{\edit{2n\sigma}} - E(q) \right) \\
            &\geq 1 - \frac{\text{Var}(\frac{1}{n} \sum_{j=1}^n \| Z_j\|_{\tr})}{\left(\tildewmin/\edit{(}2n\sigma\edit{)} - E(q)\right)^2} \\
            &= 1 - \frac{V(q)/n}{\left(\tildewmin/\edit{(}2n\sigma\edit{)} - E(q)\right)^2}.
    \end{align*}
    Therefore, \edit{we know that} $\mathbb P \left(\pi_M(\fw (S+\epsilon)) \cap \relint K \neq \emptyset\right) \geq 1-\eta$ whenever \edit{the inequality} \edit{$\eta \geq V(q)n^{-1} (\tildewmin / (2n \sigma) - E(q))^{-2}$ holds\edit{; this inequality holds} for all $\sigma$ such that}
    \begin{align*}
        \sigma \leq \frac{\tildewmin}{2 (n E(q)+\sqrt{nV(q)/ \eta})}.
    \end{align*}
    It therefore suffices to find $s$ such that 
    \[
        s < \frac{\sqrt{ \log q}}{2\sqrt{2} (E(q)+\sqrt{V(q)/n\eta})}.
    \]
    $\edit{E(q),V(q)>0}$ all $q$, \edit{thus}, there is some positive safety radius $s>0$ for all $n,q$.

    It is well known that for i.i.d. normal variables $Z_{j1}, \dots, Z_{jq}$, we have $\mathbb E \edit{[}\max_{l} Z_{jl}\edit{]} / \sqrt{\log q} \rightarrow \sqrt{2}$ as $q \rightarrow \infty$. Therefore, $ E(q) / \sqrt{\log q} = 2 \mathbb E\edit{[} \max_{l} Z_{jl}\edit{]} / \sqrt{\log q} \rightarrow 2 \sqrt{2}$  as $q \rightarrow \infty$. Thus, for all $\eta, \delta > 0$, and sufficiently large $n,q$, we have $\eta$-stochastic safety radius $s> 1/8 - \delta$.
\end{proof}

This result mirrors Proposition 2 of \citep{gascuel2016stochastic}, providing some theoretical intuition for the order of magnitude we might expect for $s$. 
\Cref{thm: stochastic safety radius} provides a benchmark against which to compare safety radius estimates, but it is known to be hard to compute exact tight safety radii values in general. Rather, stochastic stability is best measured through computational simulation\edit{, which we now demonstrate}.

Let $M$ be the graphical matroid from our running example. 
We select a point $v_0$ from the relative interior of a randomly selected maximal cone $K\in\nested(M)$ such that $\|v_0 \|_{\tr}=1$.
Let $S=\{v_j\mid j\in[n]\}\subset\R^q$ be a sample of size $n$ centered on $v_0$. Here we sample via a multivariate Gaussian $v_j\sim\mathcal{N}(v_0,\mathbb{I}_q\sigma)$ with $\sigma=3$ and $n=25$, followed by a projection of the points to $\uspace_M$. Since we require $\{\fw(S)\cap \relint K\} \neq \emptyset$ for some $K\in\nested(M)$, we reject any sample $S$ failing to do so. This explains the decision to project samples to $\uspace_M$. Projection ensures (via \Cref{thm: symmetric tropical median}) that $S\cap\uspace_M\neq\emptyset$, which greatly improves the likelihood that $S\cap\relint K\neq \emptyset$ for a maximal cone $K\in\nested(M)$. Thus, while projection here is not strictly necessary, we find that in practice it helps to reduce sampling rejection rates. We repeat this procedure as necessary, selecting a new $v_0$ for each sample, until we attain a collection of 1,000 such samples.

\edit{To any sample $S$ with $\fw(S)\cap \relint K\neq\emptyset$ for a maximal cone $K\in \nested(M)$, we can apply noise of varying magnitude, and check whether $\{\pi_M\!\left(\fw(S + \epsilon) \right)\cap \relint K\} \neq \emptyset$. This general procedure is outlined in \cref{alg:test_fw}.}

\begin{algorithm}
    \caption{Determine if $\{\pi_M\!\left(\fw(S+\epsilon)\right) \cap \relint K\} \neq\emptyset$.}\label{alg:test_fw}
    \begin{algorithmic}
        \State \textbf{Input}: A sample $S=\{\sample \} \subset \TPT{q}$ such that $W:=\{\fw(S)\cap\relint K\}$ is non-empty for some maximal cone $K\in \nested(M)$. Noise variance $\sigma_\epsilon^2$.
        \State \textbf{Output}: Total noise $\sum_j \|\epsilon_j \|_{\tr}$, $\tilde{w}_{\min}$ and (if possible) a point $x\in \fw(S + \epsilon)$ satisfying $\pi_M(x)\in \relint K$.
        \begin{algorithmic}[1]
            \State Compute $\fw(S)$.
            \State Compute $\tilde{w}_{\min}:=\max_{w\in W}(w_{\min})$ and $\tilde{w}:=\argmax_{w\in W}(w_{\min})$.
            \For {$j=1,\ldots,n$} generate noise $\epsilon_j\sim \mathcal{N}(0,(\tilde{w}_{\min}\sigma_\epsilon)^2)$.
            \EndFor
            \State Compute $P:=\fw(\{v_1+\epsilon_1,\ldots,v_n+\epsilon_n \})$.
            \If {$\exists x \in P$ such that $\pi_M(x) \in \relint K$} $x^* \leftarrow x$ \Else {$~x^*\leftarrow\emptyset$}
            \EndIf
        \end{algorithmic}
        \Return $\tilde{w}_{\min}$, $\sum_j\|\epsilon_j\|_{\tr}$, and $x^*$.
    \end{algorithmic}
\end{algorithm}

The non-trivial portions of \cref{alg:test_fw} are Lines 1, 2, and 5. In Line 1 (and similarly in Line 4) we compute the Fermat--Weber set's hyperplane representation using the linear programming outlined in \citep{sabol2024polytropes}. To compute $\tilde{w}$ in Line 2 we solve for the Chebyshev center of $K$ (under the tropical norm) subject to additional feasibility constraints imposed by $\fw(S)$. A linear program for this problem is feasible by the assumption $\fw(S) \cap K \neq \emptyset$ and bounded by $\fw(S)$. Its solution directly yields $\tilde{w}$ and $\tilde{w}_{\min}$ as optimal decision variables with $\tilde{w}_{\min}>0$ if and only if $\fw(S)\cap\relint K\neq \emptyset$. In fact, this is precisely the computation we perform when deciding whether to accept or reject a sample generated in the manner described in the previous paragraph. In other words, we compute Lines 1-2 when generating our samples.

\edit{After applying noise in Line 3, the points of the perturbed sample $S+\epsilon$ are not $M$-ultrametrics, even if $S\subset \uspace_M$ originally. 
Thus, we cannot leverage \cref{thm: symmetric tropical median} in evaluating Line 5. 
Instead, we compute the preimage of $K$ under $\pi_M$ and check if $\{\fw(S + \epsilon) \,\cap\,\pi_M^{-1}(K)\}$ is nonempty.
The preimage of $K$ is a union of polyhedral cones, each of which arises by resolving disjunctive inequalities associated to the set of partial orders encoded in the nested set structure of $K$. For example, from \cref{fig:M(G)} we see that the nested set structure for $K_{S_1}$ requires $\min\{x_1,x_2\}\geq\min\{x_4,x_5\},\min\{x_7,x_8\}$. Thus, $\pi_M^{-1}(K_{S_1})$ is the union of the four polyhedral cones whose associated half-spaces differ according to the four possible ways to assign minima on the right-hand side of the preceding inequality.
By enumerating these preimage cones, we can perform a sequence of constrained Chebyshev center linear programs over each of them in turn, stopping if we encounter a feasible solution with positive objective function value. Such a solution $x$ provides $x^*=\pi_M(x)$ in Line 5. If no such $x$ is found after checking all cones in the preimage set, we conclude 
$\{\pi_M\!\left(\fw(S+\epsilon)\right)\cap\relint K\}$ is empty.}

\edit{The next two examples demonstrate the empirical tightness of the bound given in \cref{lem: hausdorff} and the Fermat--Weber $M$-ultrametric stochastic safety radius. Both examples use the matroid $M$ from our running example and 1,000 samples of size $n=25$ that are generated in the manner described earlier.}

\begin{example}\label{ex:haussdorff}
    \edit{
    To each of 1,000 samples $S$, we apply uniform noise $\epsilon$ scaled so that $\sum_j\|\epsilon_j\|_{\mathrm{tr}}\approx\tilde{w}_{\min}/2$. That is, we apply noise that meets the threshold stipulated by our bound in \cref{thm: tropical safety radius}. Next, we compute the Fermat-Weber polytrope $P=\fw(S+\epsilon)$, project $\tilde{w}$ onto $P$, and then measure $\dtr(\tilde{w}, \pi_P(\tilde{w}))$. The tropical vertices of $P$ used for the projection can be computed by solving a network flow problem (see \citep{sabol2024polytropes}). \cref{fig:Safety_Rad_Demo} (left) plots these 1,000 distances by the value of $\tilde{w}_{\min}$ with the red dotted line indicating the bound given by \cref{lem: hausdorff}, offering some insight into its empirical tightness.}
\end{example}

\begin{example}\label{ex:SSR}
    \edit{Using the same 1,000 samples as in \cref{ex:haussdorff}, we execute \cref{alg:test_fw} once for each noise level $\sigma_\epsilon \in \{1/32, 1/16, 1/8, 1/4, 1/2, 1, 2, 3,4,5 \}$. 
    At each iteration we record $\sum_{j=1}^n \|\epsilon_j \|_{\tr}$ and whether $\pi_M\!\left(\fw(S + \epsilon) \right)\cap \relint K \neq \emptyset$. The results of the 10,000 trials are depicted in \cref{fig:Safety_Rad_Demo} (right) where the vertical dashed line indicates the noise threshold ratio provided in \cref{thm: tropical safety radius}. It is evident in this demonstration that the bound is extremely conservative, which is line with the observations made in \citep{gascuel2016stochastic} for phylogenetic trees.}
\end{example}

\begin{figure}
    \centering
    \includegraphics[width=1\linewidth]{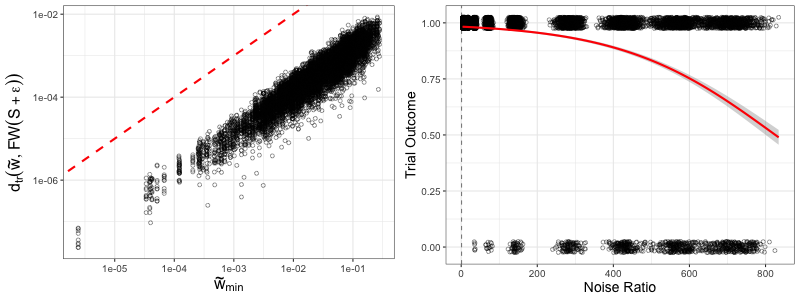}
    \caption{
    \edit{
    Left: Tropical distances between $\tilde{w}$ and $\fw(S + \epsilon)$ after applying noise at the threshold given by \cref{thm: tropical safety radius}. Using 1,000 separate samples, we apply uniform noise proportionally scaled to achieve $\sum_j\|\epsilon_j\|_{\mathrm{tr}}\approx \tilde{w}_{\min}/2$. We write approximately only to account for floating point precision in the computations. The red dashed line indicates the upper-bound on the Hausdorff distance given in \cref{lem: hausdorff}. We plot the results in log-log coordinates for better readability. \\
    Right: Results for 10,000 applications of \cref{alg:test_fw} using the same samples as in \cref{ex:haussdorff}.
    An outcome of $0$ indicates $\pi_M\!\left(\fw(S+\epsilon)\right)\cap \relint K = \emptyset$. 
    To each of the 1,000 separate samples we apply Gaussian noise according to 10 different progressively increasing variances so that we have 10 tests per sample. We plot the x-axis using the ratio of $\sum_j\|\epsilon_j\|_{\mathrm{tr}}/(\tilde{w}_{\min}/2)$ to help normalize across samples. The vertical dashed line at $x=1$ corresponds to the threshold of this ratio required in \cref{thm: tropical safety radius}. A logistic regression (red curve) with standard errors (gray bands) is added to show the overarching trend.}
    }
    \label{fig:Safety_Rad_Demo}
\end{figure}

\edit{In order to determine how closely the results of \cref{ex:SSR} match with other matroids on the same ground set, we repeat \cref{ex:SSR} for all loopless, connected matroids on 8 elements with rank $r(M)\geq 2$. For this we use the matroid database in \textsc{SageMath} \citep{sagemath} to enumerate all matroids on $[8]$. After filtering out those we wish to exclude, we are left with 1,151 matroids. For expediency, we reduce the number of samples per matroid from 1,000 down to 10 while keeping the sample size $n=25$ the same. Additionally, we apply noise only at $\sigma_\epsilon\in\{1/32, 1/8, 1, 3, 5 \}$. The resulting 57,550 trials are depicted in \cref{fig:all_matroids} where we plot the $x$-axis on a log scale to better highlight the outcomes for small $\sigma$.}

\begin{figure}
    \centering
    \includegraphics[width=0.75\linewidth]{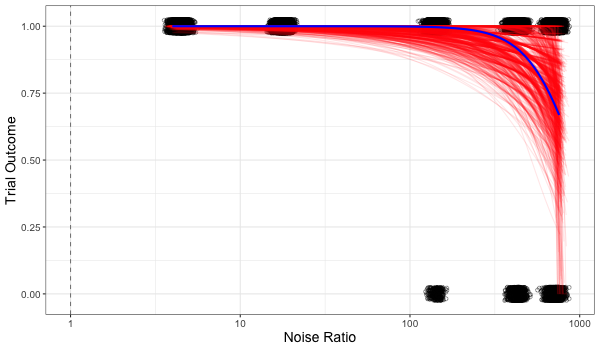}
    \caption{\edit{We repeat \cref{ex:SSR} for all loopless, connected matroids on 8 elements with rank $r(M)\geq 2$. For each matroid we construct 10 samples and vary $\sigma_\epsilon\in \{1/32,1/8,1,3,5 \}$. Each red curve is a logistic regression for the experimental outcomes of a different matroid. The blue curve corresponds to the matroid from our running example. We plot the $x$-axis on a log scale to better depict results $\sigma$ closer to the noise threshold (vertical dashed line at $x=1$).}}
    \label{fig:all_matroids}
\end{figure}

\section{Conclusions}
\edit{In this paper we explore the stability of projection to the space of $M$-ultrametrics for Fermat-Weber sets under additive noise. We extend several known results for the specific case of phylogenetic trees to the more general setting of arbitrary matroids. Our definitions (and proofs) utilize $w_{\min}$ based on the nested set fan under a minimal building set, but the concepts apply equally to any refinement of the Bergman fan with an appropriate update to the formula given in \cref{defn: safety radius}. Our empirical results suggest that the FW M-ultrametric safety radius is very conservative, which was expected given that the ultrametric safety radius is known to be extremely conservative in the case of phylogentic trees.  In future work, it would be interesting to find a tighter bound.}

\backmatter

\bmhead{Acknowledgments}

The authors thank to Yue Ren and Connor Simpson for useful conversations, and the anonymous reviewers for their comments.
RY and JS are partially supported by NSF Statistics Program DMS 2409819. RT receives partial funding from a Technical University of Munich--Imperial College London Joint Academy of Doctoral Studies (JADS) award (2021 cohort, PIs Drton/Monod). SC, RT and RY are grateful to Jane Coons for bringing us together at the University of Oxford during the Workshop for Women in Algebraic Statistics in July 2024 supported by St John's College, Oxford, the L'Oreal-UNESCO For Women in Science UK and Ireland Rising Talent Award in Mathematics and Computer Science (awarded to Jane Coons), the Heilbronn Institute for Mathematical Research, and the UKRI/EPSRC Additional Funding Programme for the Mathematical Sciences.

\begin{appendices}

\section{Additional Computational Results}\label{app:results}

To \edit{study} the empirical tightness of the bound on $d_H\left(\mathrm{FW}(S), \mathrm{FW}(S+\epsilon)\right)$ \edit{given by} \Cref{lem: hausdorff}, we \edit{generate a random sample $S=\{\sample \}$, where each $v_j$ drawn uniformly from the $q$-dimensional unit cube. 
To each $v_j$ we add standard Gaussian noise $\epsilon_j\sim \mathcal{N}(0,\mathbb{I}_q)$, where $\mathbb{I}$ is the $q\times q$ identify matrix. 
We compute $P=\fw(S)$ and $P^\prime=\fw(S+\epsilon)$, and for each vertex $p\in P$ we compute $\dtr(p, \pi_{P^\prime}(p))$. 
We perform similar computations for $\dtr(p^\prime, \pi_P(p^\prime))$ and then take the maximum over all such distances to attain $d_H(\fw(S),\fw(S+\epsilon))$. We repeat this procedure 1,000 times for multiple values of $n,q$, and for each iteration, compute} the \emph{scaled shift}: 
\[
    \edit{d_H(\fw(S), \fw(S+\epsilon)) / (\textstyle  \sum_{j} \|\epsilon_j \|_{\tr}).}
\]
\edit{Results are depicted in \Cref{fig:haust_dist_histograms}. We note that in all trials, the scaled shift ($x$-axis) never exceeds a value of one, and the highest values are attained when $n=2$. We also observe that scaled shift values generally decrease as a sample size increases. This suggests that it may be possible to modify the factor of two included in \cref{lem: hausdorff} to incorporate information on a sample size so as to achieve tighter bounds.}

\begin{figure}[H]
    \centering
    \includegraphics[width = \textwidth]{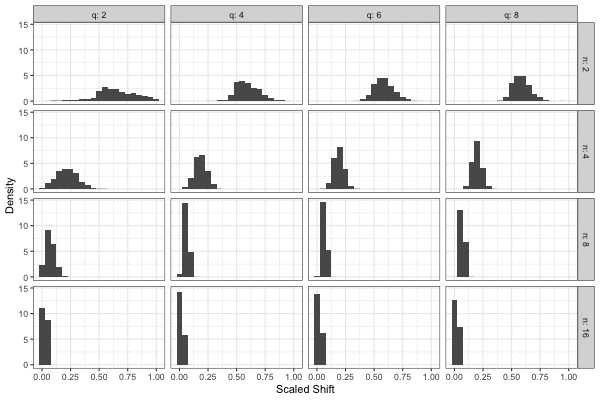}
    \caption{
    Each histogram records the scaled shifts for 1,000 $q$-dimensional samples of $n$ points, for $q = 2,4,6,8$ and $n = 2, 4, 8, 16$. The 1,000 independent samples and perturbations are generated according to the procedure described in Appendix \ref{app:results}.
    }
    \label{fig:haust_dist_histograms}
\end{figure}

\end{appendices}

\bibliography{ref}

\begin{thebibliography}{20}
\providecommand{\natexlab}[1]{#1}
\providecommand{\url}[1]{{#1}}
\providecommand{\urlprefix}{URL }
\providecommand{\doi}[1]{\url{https://doi.org/#1}}
\providecommand{\eprint}[2][]{\url{#2}}
 \bibcommenthead

\bibitem[{Ardila(2004)}]{ardila2005subdominant}
Ardila F (2004) Subdominant matroid ultrametrics. Ann Comb 8(4):379--389.
  \doi{10.1007/s00026-004-0227-1}

\bibitem[{Ardila and Klivans(2006)}]{arilda2006bergmancomplex}
Ardila F, Klivans CJ (2006) The {B}ergman complex of a matroid and phylogenetic
  trees. J Combin Theory Ser B 96(1):38--49. \doi{10.1016/j.jctb.2005.06.004}

\bibitem[{Atteson(1999)}]{atteson1999performance}
Atteson K (1999) The performance of neighbor-joining methods of phylogenetic
  reconstruction. Algorithmica 25(2-3):251--278. \doi{10.1007/PL00008277}

\bibitem[{Bernstein(2020)}]{bernstein2020infinity}
Bernstein DI (2020) {$L$}-infinity optimization to {B}ergman fans of matroids
  with an application to phylogenetics. SIAM J Discrete Math 34(1):701--720.
  \doi{10.1137/18M1218741}

\bibitem[{Buneman(1974)}]{buneman1974properties}
Buneman P (1974) A note on the metric properties of trees. J Combinatorial
  Theory Ser B 17(1):48--50. \doi{10.1016/0095-8956(74)90047-1}

\bibitem[{Develin and Sturmfels(2004)}]{develin2004tropicalconvexity}
Develin M, Sturmfels B (2004) Tropical convexity. Doc Math 9:1--27.
  \doi{10.4171/dm/154}

\bibitem[{Dress et~al.(2005)Dress, Holland, Huber, Koolen, Moulton, and
  Weyer-Menkhoff}]{dress2005delta}
Dress A, Holland B, Huber KT, et~al (2005) {$\Delta$} additive and {$\Delta$}
  ultra-additive maps, {G}romov's trees, and the {F}arris transform. Discrete
  Appl Math 146(1):51--73. \doi{10.1016/j.dam.2003.01.003}

\bibitem[{Dress et~al.(2014)Dress, Huber, and Steel}]{dress2014associated}
Dress AWM, Huber KT, Steel M (2014) A matroid associated with a phylogenetic
  tree. Discrete Math Theor Comput Sci 16(2):41--55. \doi{10.46298/dmtcs.2078}

\bibitem[{Feichtner and Sturmfels(2005)}]{feichtner2005matroid}
Feichtner EM, Sturmfels B (2005) Matroid polytopes, nested sets and {B}ergman
  fans. Port Math (NS) 62(4):437--468.
  \urlprefix\url{http://eudml.org/doc/52519}

\bibitem[{Gascuel and McKenzie(2004)}]{gascuel2004performance}
Gascuel O, McKenzie A (2004) Performance analysis of hierarchical clustering
  algorithms. J Classification 21(1):3--18. \doi{10.1007/s00357-004-0003-2}

\bibitem[{Gascuel and Steel(2016)}]{gascuel2016stochastic}
Gascuel O, Steel M (2016) A `stochastic safety radius' for distance-based tree
  reconstruction. Algorithmica 74(4):1386--1403.
  \doi{10.1007/s00453-015-0005-y}

\bibitem[{Hayashi(2000)}]{hayashi2000extremum}
Hayashi F (2000) Econometrics. Princeton University Press, Princeton, NJ

\bibitem[{Jaggi et~al.(2008)Jaggi, Katz, and Wagner}]{jaggi2008new}
Jaggi M, Katz G, Wagner U (2008) New results in tropical discrete geometry.
  {P}reprint.
  \urlprefix\url{https://www.m8j.net/math/TropicalDiscreteGeometry.pdf}

\bibitem[{Joswig(2021)}]{ETC}
Joswig M (2021) Essentials of tropical combinatorics, Graduate Studies in
  Mathematics, vol 219. American Mathematical Society, Providence, RI,
  \doi{10.1090/gsm/219}

\bibitem[{Lin and Yoshida(2018)}]{Lin2016TropicalFP}
Lin B, Yoshida R (2018) Tropical {F}ermat-{W}eber points. SIAM J Discrete Math
  32(2):1229--1245. \doi{10.1137/16M1071122}

\bibitem[{Lin et~al.(2017)Lin, Sturmfels, Tang, and Yoshida}]{lin2017convexity}
Lin B, Sturmfels B, Tang X, et~al (2017) Convexity in tree spaces. SIAM J
  Discrete Math 31(3):2015--2038. \doi{10.1137/16M1079841}

\bibitem[{Rinc\'on(2013)}]{rincon2013computing}
Rinc\'on F (2013) Computing tropical linear spaces. J Symbolic Comput
  51:86--98. \doi{10.1016/j.jsc.2012.03.008}

\bibitem[{Sabol et~al.(2024)Sabol, Barnhill, Yoshida, and
  Miura}]{sabol2024polytropes}
Sabol J, Barnhill D, Yoshida R, et~al (2024) Tropical polytropes.
  \urlprefix\url{https://arxiv.org/pdf/2402.14287.pdf}

\bibitem[{Semple and Steel(2003)}]{phylogeneticsTextbook}
Semple C, Steel M (2003) Phylogenetics, Oxford Lecture Series in Mathematics
  and its Applications, vol~24. Oxford University Press, Oxford, Oxford, UK,
  \doi{10.1093/oso/9780198509424.001.0001}

\bibitem[{{The Sage Developers}(2022)}]{sagemath}
{The Sage Developers} (2022) {S}age{M}ath, the {S}age {M}athematics {S}oftware
  {S}ystem. \urlprefix\url{https://www.sagemath.org}

\end{thebibliography}

\end{document}